\theoremstyle{definition}
\newtheorem{dfn}{Definition}[section]}
\newtheorem{prop}[dfn]{Proposition}
\newtheorem{thm}[dfn]{Theorem}
{\theoremstyle{definition}
\newtheorem{rem}[dfn]{Remark}}
\newtheorem{lem}[dfn]{Lemma}
\newtheorem{cor}[dfn]{Corollary}
\newtheorem{conj}[dfn]{Conjecture}
\theoremstyle{definition}
\newtheorem{exa}[dfn]{Example}}
\definecolor{alizarin}{rgb}{0.82, 0.1, 0.26}
\definecolor{azure(colorwheel)}{rgb}{0.0, 0.5, 1.0}
\definecolor{blue(pigment)}{rgb}{0.2, 0.2, 0.6}
\definecolor{denim}{rgb}{0.08, 0.38, 0.74}
\definecolor{mint}{rgb}{0.24, 0.71, 0.54}
\definecolor{parisgreen}{rgb}{0.31, 0.78, 0.47}
\definecolor{persiangreen}{rgb}{0.0, 0.65, 0.58}
\definecolor{seagreen}{rgb}{0.18, 0.55, 0.34}
\definecolor{shamrockgreen}{rgb}{0.0, 0.62, 0.38}
\definecolor{green(pigment)}{rgb}{0.0, 0.65, 0.31}
\newcommand{\marginparstretch}{0.6}
\let\oldmarginpar\marginpar
\renewcommand\marginpar[1]{\-\oldmarginpar[\framebox{\setstretch{\marginparstretch}\begin{minipage}{\marginparwidth}{\raggedleft\tiny #1}\end{minipage}}]{\framebox{\setstretch{\marginparstretch}\begin{minipage}{\marginparwidth}{\raggedright\tiny #1}\end{minipage}}}}
\newcommand\reallywidecheck[1]{%
\savestack{\tmpbox}{\stretchto{%
   \scaleto{%
    \scalerel*[\widthof{\ensuremath{#1}}]{\kern-.6pt\bigwedge\kern-.6pt}%
    {\rule[-\textheight/2]{1ex}{\textheight}}
   }{\textheight}%
}{0.5ex}}%
\stackon[1pt]{#1}{\scalebox{-1}{\tmpbox}}%
}
\def\ang<#1>{\langle #1 \rangle}
\def\bigang<#1>{\left\langle #1 \right\rangle}
\numberwithin{equation}{section}
\newcommand{\Spec}{\operatorname{Spec}}
\newcommand{\Supp}{\operatorname{Supp}}
\newcommand{\Hom}{\operatorname{Hom}}
\newcommand{\Ext}{\operatorname{Ext}}
\newcommand{\RHom}{\operatorname{{\mathbf R}Hom}}
\newcommand{\Aut}{\operatorname{Aut}}
\newcommand{\Db}{\operatorname{D^b}}
\newcommand{\Perf}{\operatorname{Perf}}
\newcommand{\Eq}{\operatorname{Eq}}
\newcommand{\Th}{\mathbf{Th}}
\newcommand{\rk}{\operatorname{\sf rk}}
\newcommand{\Stab}{\operatorname{Stab}}
\def\Ser{\mathop{\sf Ser}\nolimits}
\def\FM{\mathop{\sf FM}\nolimits}
\def\pFM{\mathop{\sf pFM}\nolimits}
\newcommand{\cE}{\mathcal{E}}
\newcommand{\cH}{\mathcal{H}}
\newcommand{\cL}{\mathcal{L}}
\newcommand{\cO}{\mathcal{O}}
\newcommand{\cP}{\mathcal{P}}
\newcommand{\cT}{\mathcal{T}}
\newcommand{\scrA}{\EuScript{A}}
\newcommand{\scrC}{\EuScript{C}}
\newcommand{\scrE}{\EuScript{E}}
\newcommand{\scrI}{\EuScript{I}}
\newcommand{\scrM}{\EuScript{M}}
\newcommand{\scrP}{\EuScript{P}}
\newcommand{\scrQ}{\EuScript{Q}}
\newcommand{\scrS}{\EuScript{S}}
\newcommand{\scrT}{\EuScript{T}}
\newcommand{\scrU}{\EuScript{U}}
\newcommand{\bC}{\mathbb{C}}
\newcommand{\bR}{\mathbb{R}}
\newcommand{\bZ}{\mathbb{Z}}
\newcommand{\simto}{\xrightarrow{\sim}}
\newcommand{\Halg}{\widetilde{H}^{1,1}}
\newcommand{\NS}{\mathrm{NS}}
\newcommand{\ch}{\mathrm{ch}}
\newcommand{\Cone}{\operatorname{Cone}}
\newcommand{\spec}{\mathrm{ Spec}_{\triangle}}
\tikzset{
        DB/.style={circle,draw=black,circle,fill=white,inner sep=0pt, minimum size=4pt},
        DW/.style={circle,draw=black,fill=black,inner sep=0pt, minimum size=4pt},
        cvertex/.style={circle,draw=black,fill=white,inner sep=1pt,outer sep=3pt},
        vertex/.style={circle,fill=black,inner sep=1pt,outer sep=3pt},
        star/.style={circle,fill=yellow,inner sep=0.75pt,outer sep=0.75pt},
        tvertex/.style={inner sep=1pt,font=\scriptsize},
	pvertex/.style={circle,inner sep=1pt,outer sep=2pt,font=\scriptsize},
        gap/.style={inner sep=0.5pt,fill=white}
}
\newcommand*{\defeq}{\mathrel{\rlap{%
                     \raisebox{0.3ex}{$\m@th\cdot$}}%
                     \raisebox{-0.3ex}{$\m@th\cdot$}}%
                     =}
\begin{document}

\title[]{Fourier-Mukai loci of K3 surfaces of Picard number one}
\author[Y.~Hirano]{Yuki Hirano}
\author[G.~Ouchi]{Genki Ouchi}

\address{Y.~Hirano, Tokyo University of Agriculture and Technology, 2–24–16 Nakacho, Koganei, Tokyo 184–8588, Japan}
\email{hirano@go.tuat.ac.jp}

\address{G.~Ouchi, Graduate School of Mathematics, Nagoya University, Furocho, Chikusaku, Nagoya, Japan, 464-8602}
\email{genki.ouchi@math.nagoya-u.ac.jp}

\begin{abstract}
In this paper, we describe the Fourier-Mukai locus of the derived category of a complex algebraic K3 surface of Picard number one. We also prove that the Fourier-Mukai locus of the derived category of a complex algebraic K3 surface of Picard number one is strictly smaller than it's Matsui spectrum.
\end{abstract}

\maketitle{}

\section{Introduction}

\subsection{Balmer spectrum}~

For a tensor triangulated category $(\scrT,\otimes)$, Balmer \cite{balmer2,balmer,balmer3}
constructed the locally ringed space $(\Spec_{\otimes}\scrT, \cO_{\scrT, \otimes})$, which is called the {\it Balmer spectrum} of $(\scrT, \otimes)$. It is known that the set $\Th(\scrT)$ of all thick subcategories of a triangulated category $\scrT$ has the topology called the Zariski topology. The topological space $\Spec_{\otimes}\scrT$ is constructed as the topological subspace of $\Th(\scrT)$ that consists of prime thick ideals of the tensor triangulated category $(\scrT,\otimes)$.
The {\it tensor triangular geometry} \cite{balmer4} is the study of tensor triangulated categories by the Balmer spectra, and it is developed in various areas including algebraic geometry, stable homotopy theory, modular representation theory and motivic theory. In algebraic geometry, it is known that the Balmer spectrum of the tensor triangulated category $(\Perf X, \otimes_X)$ associated to a noetherian scheme $X$ is isomorphic to the scheme $X$ as ringed spaces, where $\Perf X$ is the perfect derived category of $X$ and $\otimes_X$ is the derived tensor product on $X$.

\subsection{Matsui spectrum}~

For a triangulated category $\scrT$, Matsui \cite{matsui,matsui2} constructed the ringed space $(\Spec_{\triangle}\scrT, \cO_{\scrT})$, which is called the {\it Matsui spectrum} of $\scrT$. The topological space $\Spec_{\triangle}\scrT$ is constructed as the topological subspace of $\Th(\scrT)$ that consists of prime thick subcategories of $\scrT$. For a noetherian reduced quasi-affine scheme $X$, the Matsui spectrum of $\Perf X$ is isomorphic to $X$ as ringed spaces. In general, 
for a noetherian scheme $X$, the Balmer spectrum $\Spec_{\otimes_X}\Perf X$ is a topological subspace of the Matsui spectrum $\Spec_{\triangle}\Perf X$. 
For noetherian schemes $X$ and $Y$, if $\Phi \colon \Perf X \simto \Perf Y$ is an equivalence, $\Phi$ induces the embedding $\Phi \colon \Spec_{\otimes_Y}\Perf Y \to \Spec_{\triangle}\Perf X$ as topological spaces. In particular, for a smooth variety $X$, the Matsui spectrum $\Spec_{\triangle}\Db(X)$ of the derived category $\Db(X)$ of $X$ contains all Fourier-Mukai partners as Zariski topological spaces. Therefore, it is interesting to determine the Matsui spectrum of the derived category of a smooth variety. 
The following are the only known cases other than smooth quasi-affine varieties. In the rest of this section, we work over an algebraically closed field $k$.

\begin{exa}[\cite{ho}, \cite{matsui2}]\label{ex:known case}
Let $X$ be a smooth projective curve of genus $g$.
\begin{itemize}
\item[(1)] If $g=0$, we have an isomorphism
\[ \Spec_{\triangle}\Db(X) \simeq X \sqcup \bZ \]
as ringed spaces, where $X$ is realized as  $\Spec_{\otimes_X}\Db(X)$.
\item[(2)] Consider the set 
\[I\defeq\{(r,d)\in \bZ^2 \mid r>0, \gcd(r,d)=1\}.\]
If $g=1$, we have an isomorphism
\[\spec \Db(X) \simeq X \sqcup \bigsqcup_{(r,d) \in I} M(r,d) \]
as ringed spaces, where $X$ is realized as $\Spec_{\otimes_X}\Db(X)$ of $\Db(X)$ and $M(r,d)$ is the moduli space of $\mu$-semistable vector bundles on $X$ with the Chern character $(r,d)$. Note that $M(r,d)$ is also an elliptic curve isomorphic to $X$.
\end{itemize}
\end{exa}

In general, it is not easy to determine the Matsui spectrum $\Spec_{\triangle}\Db(X)$ of the derived category $\Db(X)$ of a smooth projective variety $X$. For example, even if $X$ is a smooth projective curve of genus $g \geq 2$, we do not have a complete description of the topological space $\Spec_{\triangle}\Db(X)$. 

\begin{rem}[Proposition \ref{prop:high genus}]
For a smooth projective curve $X$ of genus $g \geq 2$, we have
\[ \Spec_{\triangle}\Db(X) \setminus \Spec_{\otimes_X}\Db(X) \neq \emptyset. \]
\end{rem}

\subsection{Serre invariant locus and Fourier-Mukai locus}~

Let $X$ be a smooth projective variety.
Instead of studying the whole space $\Spec_{\triangle}\Db(X)$, we often study simpler subspaces of $\Spec_{\triangle}\Db(X)$. By the same construction as in \cite{matsui2}, for any topological subspace $S$ of $\Th(\scrT)$, we can regard $S$ as a ringed space $(S,\cO_S)$.  The {\it Serre invariant locus} $\Spec^{\Ser}_{\triangle}\Db(X)$, introduced in \cite{ho}, consists of prime thick subcategories of $\Db(X)$ invariant under the Serre functor. If $X$ is a Calabi-Yau variety or a Fano variety, we have the following.

\begin{rem}[{\cite[Corollary 5.4]{ho}}]\label{rem:CY and Fano, intro}
The following holds. 
\begin{itemize}
\item[(1)] Assume that $X$ is a Calabi-Yau variety, that is $\omega_X \simeq \cO_X$.
Then we have
\[ \Spec_{\triangle}^{\Ser}\Db(X)=\Spec_{\triangle}\Db(X) \]
as ringed spaces.
\item[(2)] Assume that $\omega_X$ or $\omega^{-1}_X$ is ample. Then we have
\[ \Spec_{\triangle}^{\Ser}\Db(X)=\Spec_{\otimes_X}\Db(X) \]
as ringed spaces. This reproves Bondal-Orlov's reconstruction theorem in \cite{bo}.
\end{itemize}
\end{rem}

In \cite{ito}, Ito introduced the {\it Fourier-Mukai locus} $\Spec^{\FM}_{\triangle}\Db(X)$ of $\Db(X)$. The Fourier-Mukai locus $\Spec^{\FM}_{\triangle}\Db(X)$ consists of prime thick subcategories corresponding to prime thick ideals of Fourier-Mukai partners of $X$. 
Then there are inclusions
\[ \Spec_{\otimes_X}\Db(X) \subset \Spec^{\FM}_{\triangle}\Db(X) \subset \Spec^{\Ser}_{\triangle}\Db(X) \subset \Spec_{\triangle}\Db(X). \]
If $X$ is a smooth projective curve or a smooth projective variety with the ample (anti-)canonical line bundle, the Serre invariant locus $\Spec_{\triangle}^{\Ser}\Db(X)$ and the Fourier-Mukai locus $\Spec^{\FM}_{\triangle}\Db(X)$ coincide as ringed spaces. In \cite{ito}, Ito proposed the following conjecture.

\begin{conj}[{\cite[Conjecture 6.14]{ito}}]\label{conj:Ito, intro}
For a smooth projective variety $X$,
\[ \Spec^{\FM}_{\triangle}\Db(X)=\Spec^{\Ser}_{\triangle}\Db(X) \]
holds.
\end{conj}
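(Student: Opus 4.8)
The two inclusions $\Spec_{\otimes_X}\Db(X)\subseteq\Spec^{\FM}_{\triangle}\Db(X)\subseteq\Spec^{\Ser}_{\triangle}\Db(X)$ are already recorded in the excerpt, so a proof of the conjecture reduces to the reverse of the second one, namely $\Spec^{\Ser}_{\triangle}\Db(X)\subseteq\Spec^{\FM}_{\triangle}\Db(X)$. Unwinding the definition, the Fourier--Mukai locus is the union
\[ \Spec^{\FM}_{\triangle}\Db(X)=\bigcup_{Y,\,\Phi}\Phi\big(\Spec_{\otimes_Y}\Db(Y)\big), \]
where $Y$ runs over the Fourier--Mukai partners of $X$ and $\Phi$ over the equivalences $\Db(Y)\simto\Db(X)$; since $\Spec_{\otimes_Y}\Db(Y)\simeq Y$, this is exactly the set of primes that become $\otimes_Y$-prime ideals after transport along \emph{some} such $\Phi$. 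The plan is therefore: given a prime thick subcategory $\cP$ fixed by the Serre functor $S=(-)\otimes\omega_X[\dim X]$, to produce a partner $Y$ and an equivalence $\Phi$ for which $\Phi^{-1}(\cP)$ is $\otimes_Y$-prime --- that is, to recognise each Serre-invariant prime as a point of some geometric model of $X$.

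For this reverse inclusion I would try to reconstruct such a model intrinsically from $\cP$. The guiding idea is that Balmer's identification $\Spec_{\otimes_Y}\Db(Y)\simeq Y$ matches $\otimes_Y$-primes with points of $Y$, so one wants to attach to a Serre-invariant $\cP$ a compatible support datum --- for instance a point of a moduli space of semistable objects, several of which are realised as Fourier--Mukai partners of $X$ --- and then check that the tensor ideal attached to that point recovers $\cP$. In this picture Serre-invariance would enter as compatibility of $\cP$ with the canonical twist $(-)\otimes\omega_X$, the fragment of the monoidal structure still visible at the level of $S$.

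The step I expect to be the genuine obstacle is exactly this last one, and I do not expect it to go through in general. For a Calabi--Yau variety the Serre functor is a shift, so \emph{every} thick subcategory is Serre-invariant and $\Spec^{\Ser}_{\triangle}\Db(X)=\Spec_{\triangle}\Db(X)$ (the Calabi--Yau case recorded in the excerpt); the conjecture would then force \emph{every} prime thick subcategory of $\Db(X)$ to be a Fourier--Mukai point. Serre-invariance supplies none of the missing tensor structure, and a K3 surface possesses only finitely many Fourier--Mukai partners, so the union defining $\Spec^{\FM}_{\triangle}\Db(X)$ can account for only finitely many copies of a surface. I would therefore expect the inclusion $\Spec^{\Ser}_{\triangle}\subseteq\Spec^{\FM}_{\triangle}$ to fail, the gap being witnessed by ``exotic'' prime thick subcategories that carry no compatible monoidal structure under any Fourier--Mukai equivalence. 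Exhibiting such a prime for a K3 surface of Picard number one --- thereby separating $\Spec^{\FM}_{\triangle}\Db(X)$ from $\Spec^{\Ser}_{\triangle}\Db(X)=\Spec_{\triangle}\Db(X)$ --- is precisely where a direct proof of the conjecture collapses, and is what the present paper carries out.
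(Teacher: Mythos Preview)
You have correctly read the situation: the statement is a conjecture that the paper \emph{disproves}, and your proposal is not a proof but an argument for why no proof should exist, deferring the actual construction to the paper. That diagnosis is right, and your identification of the Calabi--Yau case as the weak point is exactly on target, since there $\Spec^{\Ser}_{\triangle}\Db(X)=\Spec_{\triangle}\Db(X)$ and Serre-invariance imposes nothing.

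One heuristic in your argument is wrong, however, and it matters because it obscures where the real work lies. You write that a K3 surface has only finitely many Fourier--Mukai partners, so the union defining $\Spec^{\FM}_{\triangle}\Db(X)$ ``can account for only finitely many copies of a surface.'' This is not so: the union is indexed not just by partners $Y$ but by equivalences $\Phi\in\Eq(\Db(Y),\Db(X))$, and $\Aut\Db(X)$ is typically infinite for a K3 surface. Indeed the paper shows (Theorem~\ref{thm:main A}) that the Fourier--Mukai locus is a disjoint union of \emph{infinitely} many copies of each partner, indexed by $I_Y=\Eq(\Db(Y),\Db(X))/\Aut^{\mathrm{st}}\Db(Y)$. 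So a cardinality argument alone does not separate the two loci.

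The paper's mechanism is instead an invariant that distinguishes Fourier--Mukai primes from arbitrary primes: containment of a spherical object. Every prime in $\Spec^{\FM}_{\triangle}\Db(X)$ is, after transport along some equivalence, of the form $\scrS_Y(y)$ for a point $y$ of a partner $Y$; all objects of $\scrS_Y(y)$ have rank zero, and on a K3 surface of Picard number one a rank-zero object has Mukai self-pairing $\geq 0$, hence is never spherical. On the other hand $\cO_X$ \emph{is} spherical and (by \cite{ll}) is not a split generator, so Proposition~\ref{prop:non-empty} supplies a maximal thick subcategory containing it. That maximal subcategory is a prime lying in $\Spec_{\triangle}\Db(X)=\Spec^{\Ser}_{\triangle}\Db(X)$ but outside $\Spec^{\FM}_{\triangle}\Db(X)$. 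Your proposal gestures at ``exotic'' primes but does not isolate this spherical-object criterion, which is the actual content of the counterexample.
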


\subsection{Results}~

In this paper, we study the Serre invariant locus and the Fourier-Mukai locus of the derived category of a complex algebraic K3 surface of Picard number one. 
Let $X$ be a complex algebraic K3 surface. 
An autoequivalence $\Phi \colon \Db(X) \simto \Db(X)$ is {\it standard} if $\Phi$ is isomorphic to a composition of shifts, automorphisms of $X$ and tensoring line bundles on $X$. 
Denote by $\Aut^{\mathrm{st}}\Db(X)$  the subgroup of standard autoequivalences of $\Db(X)$, and 
write $\FM(X)$ for the set of isomorphism classes of Fourier-Mukai partners of $X$.
Take a Fourier-Mukai partner $Y$ of $X$, and write $\Eq(\Db(Y), \Db(X))$ for the set of isomorphism classes of equivalences from $\Db(Y)$ to $\Db(X)$. Then $\Aut\Db(Y)$ acts on $\Eq(\Db(Y), \Db(X))$ from the right. Consider the set
\[ I_Y\defeq\Eq(\Db(Y), \Db(X))/\Aut^{\mathrm{st}}\Db(Y). \]
The following is the first main theorem of this paper.

\begin{thm}[Theorem \ref{thm:main A}]\label{thm:main 1}
Assume that the Picard number of $X$ is one. 
Then we have
\[ \Spec^{\FM}_{\triangle}\Db(X)=\bigsqcup_{Y \in \FM(X)} \bigsqcup_{\Phi \in I_Y}\Phi\left(\Spec_{\otimes_Y}\Db(Y) \right). \]
\end{thm}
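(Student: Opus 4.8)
The plan is to unwind the definition of the Fourier--Mukai locus and reduce everything to understanding Balmer spectra of individual Fourier--Mukai partners, together with the action of autoequivalences. By definition, $\Spec^{\FM}_{\triangle}\Db(X)$ is the union, over all $Y \in \FM(X)$ and all equivalences $\Phi \colon \Db(Y) \simto \Db(X)$, of the images $\Phi(\Spec_{\otimes_Y}\Db(Y))$ inside $\Spec_{\triangle}\Db(X)$; here I use that an equivalence of triangulated categories induces a homeomorphism on $\Th(-)$ and hence embeds the Balmer spectrum of the source (a subspace of $\Th(\Db(Y))$) into the Matsui spectrum of the target. So the content of the theorem is twofold: first, that this union is in fact a disjoint union when indexed by $Y \in \FM(X)$ and $\Phi \in I_Y = \Eq(\Db(Y),\Db(X))/\Aut^{\mathrm{st}}\Db(Y)$; and second, that the indexing by cosets $\Phi\Aut^{\mathrm{st}}\Db(Y)$ (rather than by all equivalences) does not lose or double-count anything.

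The first step is to show the indexing by $I_Y$ is well-defined and exhaustive: I must check that for $\Psi \in \Aut^{\mathrm{st}}\Db(Y)$ we have $\Phi\circ\Psi(\Spec_{\otimes_Y}\Db(Y)) = \Phi(\Spec_{\otimes_Y}\Db(Y))$, i.e.\ that standard autoequivalences of $Y$ preserve the Balmer spectrum inside $\Th(\Db(Y))$. This is exactly because a standard autoequivalence is built from shifts, automorphisms of $Y$, and tensoring by line bundles --- all of which are $\otimes_Y$-compatible (up to the automorphism of the tensor structure induced by $f^*$), so they permute the prime thick $\otimes_Y$-ideals and fix the subspace $\Spec_{\otimes_Y}\Db(Y)$ setwise. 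Hence each summand $\Phi(\Spec_{\otimes_Y}\Db(Y))$ depends only on the class of $\Phi$ in $I_Y$, and ranging $\Phi$ over all equivalences is the same as ranging over $I_Y$; this gives the inclusion ``$\subseteq$'' of the claimed equality (and ``$\supseteq$'' is immediate from the definition).

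The crux is disjointness: I must show that if $\Phi_1 \in I_{Y_1}$ and $\Phi_2 \in I_{Y_2}$ give overlapping subspaces $\Phi_1(\Spec_{\otimes_{Y_1}}\Db(Y_1)) \cap \Phi_2(\Spec_{\otimes_{Y_2}}\Db(Y_2)) \neq \emptyset$, then $Y_1 \cong Y_2$ and $\Phi_1 = \Phi_2$ in $I_{Y_1}$. Equivalently, setting $\Psi = \Phi_2^{-1}\circ\Phi_1 \colon \Db(Y_1) \simto \Db(Y_2)$, I must show that if $\Psi$ maps some point of $\Spec_{\otimes_{Y_1}}\Db(Y_1)$ into $\Spec_{\otimes_{Y_2}}\Db(Y_2)$ --- in fact if the two Balmer-spectrum subspaces merely intersect --- then $Y_1 \cong Y_2$ and $\Psi$ is a standard equivalence. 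Here is where Picard number one enters decisively: a point of the Balmer spectrum $\Spec_{\otimes_Y}\Db(Y) \cong Y$ of lowest ``dimension'' corresponds to a skyscraper-type prime (the thick ideal of complexes supported away from a closed point), and if $\Psi$ sends such a prime to a prime of $\Spec_{\otimes_{Y_2}}\Db(Y_2)$ then $\Psi$ must carry point-objects to point-objects (this is where one uses that, for a K3 of Picard number one, the geometry is rigid enough that the only spherical/point-like configurations compatible with \emph{two} distinct tensor structures are the obvious ones). One then invokes the description of Fourier--Mukai kernels sending skyscrapers to skyscrapers --- by Orlov-type arguments these are precisely the ones of the form ``$\otimes$ line bundle followed by pullback along an isomorphism,'' i.e.\ standard equivalences composed with shifts --- forcing $Y_1 \cong Y_2$ and $\Psi$ standard.

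I expect the disjointness step to be the main obstacle, and within it the delicate point is handling \emph{all} primes of the Balmer spectrum, not just the closed points: an intersection of the two subspaces at a non-closed prime (the generic prime, or the ``prime at infinity'' coming from the non-trivial thick ideal structure on a K3 of Picard number one) must also be ruled out or shown to force standardness. Here I would exploit that on a K3 of Picard number one the lattice $\mathrm{NS}(X) = \bZ H$ is rank one, so the only $\otimes_X$-compatible autoequivalences are the standard ones (a statement one can extract from the structure of $\Aut\Db(X)$ for K3s together with the constraint of preserving the monoidal structure), and that any equivalence $\Db(Y_1)\simto\Db(Y_2)$ identifying the two Balmer subspaces would, after composing with a standard equivalence, have to be monoidal, hence induced by an isomorphism $Y_1\cong Y_2$ by the Balmer reconstruction theorem $\Spec_{\otimes_X}\Perf X \cong X$. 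Assembling these, the union is disjoint, the indexing by $I_Y$ is sharp, and the stated equality follows.
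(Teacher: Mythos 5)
Your overall skeleton matches the paper's: you correctly reduce the statement to (i) standard autoequivalences acting trivially on the Balmer spectrum (this is the paper's Proposition \ref{prop:standard}, and your argument for it is fine), and (ii) the disjointness claim that an overlap of $\Phi_1(\Spec_{\otimes}\Db(Y_1))$ and $\Phi_2(\Spec_{\otimes}\Db(Y_2))$ forces $Y_1\cong Y_2$ and $\Phi_2^{-1}\circ\Phi_1$ standard. But your treatment of (ii) has a genuine gap. The correct intermediate statement is Ito's criterion (Proposition \ref{prop:intersection}): the two Balmer subspaces intersect \emph{if and only if} $\Psi=\Phi_2^{-1}\circ\Phi_1$ is \emph{birational} up to shift, i.e.\ there is a dense open $U\subset Y_1$ and an open immersion $\varphi\colon U\to Y_2$ with $\Psi(\cO_x)\cong\cO_{\varphi(x)}$ for closed $x\in U$. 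This already gives $Y_1\cong Y_2$ for K3 surfaces, and it handles all primes at once (note the Balmer spectrum is homeomorphic to $Y$ itself, so there is no ``prime at infinity'' to worry about separately). What remains --- and what you do not actually prove --- is that a birational autoequivalence of a K3 surface of Picard number one is standard. Your appeal to ``Orlov-type arguments'' (Huybrechts' criterion that an equivalence sending skyscrapers to skyscrapers is a composition of an automorphism, a line-bundle twist and a shift) requires control of \emph{all} skyscraper sheaves, whereas birationality only controls those over the dense open $U$. Bridging that gap is the real content of the paper's Proposition \ref{prop:birational=standard}, and it is done with Bridgeland stability: one takes a geometric stability condition $\sigma$, observes that $\Phi\sigma$ has at least one stable skyscraper (namely $\cO_{\varphi(x_0)}$), invokes the Bayer--Bridgeland lemma (valid precisely because the Picard number is one) to conclude $\Phi\sigma$ lies in the geometric chamber, and then uses the moduli space $M_\sigma(0,0,1)\cong X$ to see that \emph{every} skyscraper is sent to a skyscraper, after which Huybrechts' criterion applies.

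Your substitute arguments for this step --- that ``the only spherical/point-like configurations compatible with two distinct tensor structures are the obvious ones,'' or that rank-one N\'eron--Severi forces any such $\Psi$ to be monoidal and hence induced by an isomorphism via Balmer reconstruction --- are not arguments: the equivalence $\Psi$ is not known to be monoidal (that is essentially what must be proved), and nothing in the overlap hypothesis alone prevents, a priori, a non-standard autoequivalence (e.g.\ one involving spherical twists) from being birational. So the Picard-number-one hypothesis does enter ``decisively,'' but through the stability-manifold result of Bayer--Bridgeland, not through the lattice-theoretic or monoidality considerations you sketch. As written, the proposal would not close.
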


The following is the second main theorem, which gives a counterexample of Conjecture
\ref{conj:Ito, intro}. 

\begin{thm}[Theorem \ref{thm:main B}]\label{thm:main 2}
Assume that the Picard number of $X$ is one.
Then we have
\[ \Spec^{\Ser}_{\triangle}\Db(X) \setminus \Spec^{\FM}_{\triangle}\Db(X) \neq \emptyset. \]
\end{thm}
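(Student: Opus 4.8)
The plan is to exhibit an explicit prime thick subcategory of $\Db(X)$ that is Serre-invariant but does not lie in the Fourier--Mukai locus. The natural candidates come from the moduli interpretation visible already in the genus-one curve case of Example \ref{ex:known case}(2): for a K3 surface $X$ of Picard number one, autoequivalences of $\Db(X)$ (or equivalences $\Db(Y)\simto\Db(X)$) are governed by Mukai vectors and the action on the Mukai lattice $\Halg(X)$, and the $\otimes$-spectra $\Spec_{\otimes_Y}\Db(Y)$ sitting inside $\Spec^{\FM}_{\triangle}\Db(X)$ are precisely the images of $Y$ under Fourier--Mukai transforms. So the first step is to use Theorem \ref{thm:main 1} to get a concrete description of $\Spec^{\FM}_{\triangle}\Db(X)$: a point of the Fourier--Mukai locus is, up to the $\Aut^{\mathrm{st}}$-action, the structure sheaf of a point of some Fourier--Mukai partner $Y$, transported to $\Db(X)$ by an equivalence. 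In Mukai-lattice terms such a point has a primitive isotropic Mukai vector $v$ with $v^2=0$ lying in a specific $\SO$-orbit; I would record exactly which classes in $\Halg(X)$ arise this way.

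Next I would construct a Serre-invariant prime thick subcategory whose "numerical support'' is a Mukai vector $v_0\in\Halg(X)$ that is \emph{not} in that orbit --- e.g. a class with $v_0^2>0$, or an isotropic class not equivalent under the Fourier--Mukai group action to $(0,0,1)$. Concretely: take a primitive spherical or isotropic class $v_0$ and let $E$ be a stable object (a $\mu$-stable sheaf, or an object stable for a suitable Bridgeland stability condition) with Mukai vector $v_0$; form the thick subcategory $\scrP$ generated by $E$, or rather the prime thick subcategory it determines (one can build a prime thick subcategory from the kernel of the natural "support at $E$'' functor, as in Matsui's and Ito--Ho--Ouchi's constructions). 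Since $\dim\Hom^\bullet(E,E)$ is Serre-symmetric on a Calabi--Yau surface ($\Serre=[2]$), every thick subcategory of $\Db(X)$ is automatically Serre-invariant --- indeed this is exactly Remark \ref{rem:CY and Fano, intro}(1), which tells us $\Spec^{\Ser}_{\triangle}\Db(X)=\Spec_{\triangle}\Db(X)$. So the Serre-invariance is free; the whole content is showing the point I picked is genuinely a \emph{prime} thick subcategory and genuinely \emph{not} Fourier--Mukai.

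The primality should follow the same pattern as in \cite{matsui2,ho,ito}: a thick subcategory arising as the "complement'' of a single stable (or simple) object in an appropriate sense is prime, because stability forces the relevant Hom-vanishing that makes the defining condition multiplicative on the triangulated structure. For the non-membership in $\Spec^{\FM}_{\triangle}\Db(X)$, I would argue by contradiction: if the point lay in the Fourier--Mukai locus, then by Theorem \ref{thm:main 1} there would be $Y\in\FM(X)$, an equivalence $\Phi\in I_Y$, and a closed point $y\in Y$ with the given prime thick subcategory equal to $\Phi$ applied to the $\otimes$-prime at $y$. Comparing Mukai vectors, this forces $v_0$ to lie in the $\Aut\Db(X)$-orbit of the Mukai vector of a skyscraper sheaf, namely the orbit of an isotropic vector with a specific divisibility/Brauer-class invariant. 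Choosing $v_0$ outside this orbit --- which is possible precisely because Picard number one pins down $\Halg(X)$ and its lattice automorphisms tightly enough to see that not every primitive isotropic (let alone positive) class is in that orbit --- yields the contradiction. The main obstacle, and where I expect to spend the real effort, is this last lattice-theoretic step: showing that the orbit of $(0,0,1)$ under the image of $\Aut\Db(X)$ in $\mathrm{O}(\Halg(X))$ is a \emph{proper} subset of the primitive classes of the relevant square, which requires the Hodge-isometry/derived-Torelli description of Fourier--Mukai partners and autoequivalences of K3 surfaces (Orlov, Mukai, Hosono--Lian--Oguiso--Yau) together with the Picard-rank-one hypothesis to make the orbit computation explicit; a convenient concrete choice of $v_0$ (for instance a spherical class $v_0^2=-2$, whose associated spherical object can never be a skyscraper) will likely give the cleanest argument.
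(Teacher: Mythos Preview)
Your closing parenthetical --- take a spherical class, since a spherical object can never be a skyscraper --- is exactly the idea the paper uses, but the route you sketch to get there has two genuine gaps.

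First, primality. You propose to build a prime thick subcategory ``from the kernel of the natural `support at $E$' functor'' or as ``generated by $E$'', but neither construction is actually available here: there is no support functor attached to an arbitrary stable object on a K3 surface analogous to $\scrS_X(x)$, and the thick subcategory generated by a single object is essentially never prime. The paper bypasses this completely. It observes (citing \cite{ll}) that $\cO_X$ is spherical but \emph{not a split generator} of $\Db(X)$, and then invokes Proposition~\ref{prop:non-empty} to produce a \emph{maximal} thick subcategory $\scrM$ containing $\cO_X$. Maximal implies prime for free, and Serre-invariance is automatic since $X$ is Calabi--Yau (Remark~\ref{rem:CY and Serre}(1)).

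Second, non-membership. You frame the obstruction as a Mukai-vector orbit comparison (``$v_0$ not in the $\Aut\Db(X)$-orbit of $(0,0,1)$''), but a prime thick subcategory does not carry a single Mukai vector, so this comparison has no direct meaning. The paper's argument is about the \emph{objects contained in} a prime of the Fourier--Mukai locus: any $\scrP\in\Spec^{\FM}_{\triangle}\Db(X)$ is of the form $\Phi(\scrS_Y(y))$ for some Fourier--Mukai partner $Y$ (also of Picard number one), some $y\in Y$, and some equivalence $\Phi$; every object of $\scrS_Y(y)$ has rank zero (Remark~\ref{rem:torsion}), and on a K3 of Picard number one a rank-zero object has $v^2=c^2H^2\geq 0$, hence cannot be spherical (Lemma~\ref{lem:rank zero}). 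Since equivalences preserve sphericality, no $\scrP$ in the Fourier--Mukai locus contains a spherical object --- but $\scrM$ contains $\cO_X$.

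So the actual skeleton is: (i) primes in $\Spec^{\FM}_{\triangle}$ contain no spherical objects; (ii) $\cO_X$ is spherical and not a split generator, hence lies in some maximal thick subcategory $\scrM$; (iii) $\scrM$ is the desired point. Your lattice/orbit machinery, the moduli interpretation, and Theorem~\ref{thm:main 1} are not needed at all.
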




\subsection{Notation and Convention}
\begin{itemize}
    \item In this paper, we treat only essentially small triangulated categories over an algebraically closed field $k$.
    \item  For a noetherian scheme $X$, denote the bounded derived category of coherent sheaves on $X$ (resp. the derived category of perfect complexes on $X$) by $\Db(X)$ (resp. $\Perf X$). 
    \item We denote by $\otimes_X$ the derived tensor product on $X$.
    \item For a proper morphism $f\colon X\to Y$ of smooth varieties, we write $f_*\colon \Db(X)\to \Db(Y)$ and  $f^*\colon \Db(Y)\to \Db(X)$ for the derived push-forward and the derived pull-back respectively. 
    \item For a variety (or a scheme) $X$,  a point in $X$ is not necessarily a closed point. 
    \item We treat only algebraic K3 surfaces.
\end{itemize}

\subsection{Acknowledgements}~

  Y.H. is supported by JSPS KAKENHI Grant Number 23K12956. G.O. is supported by JSPS KAKENHI Grant Number 19K14520.

\section{Spectra of triangulated categories}

\subsection{Thick subcategories}~

In this subsection, we recall the topology on the set of all thick subcategories of a triangulated category. 

Let $\scrT$ be a triangulated category. 
A {\it thick subcategory} of $\scrT$ is a full triangulated subcategory of $\scrT$ that is closed under taking direct summands.
Denote by $\Th(\scrT)$  the set of all thick subcategories of $\scrT$.
For a full subcategory $\scrE$ of $\scrT$, we define
\[ Z(\scrE)\defeq\{\scrC \in \Th(\scrT) \mid \scrC \cap \scrE=\emptyset \}. \]
Then it is easy to see the following (see \cite[Definition 2.1]{mt}). 
\begin{itemize}
    \item[$(1)$]We have  $Z(\scrT)=\emptyset$ and  $Z(\emptyset)=\Th(\scrT)$.
    \item[$(2)$]For a family $\{\scrE_i\}_{i \in I}$ of full subcategories of $\scrT$, we have  $\bigcap_{i \in I}Z(\scrE_i)=Z(\bigcup_{i \in I}\scrE_i)$.
    \item[$(3)$]For full subcategories $\scrE_1, \cE_2$ of $\scrT$, we have
    $Z(\scrE_1) \bigcup Z(\scrE_2)=Z(\scrE_1 \oplus \scrE_2)$, where $\scrE_1 \oplus \scrE_2\defeq \{E_1 \oplus E_2 \in \cT \mid E_1 \in \cE_1, E_2 \in \scrE_2 \}$.
\end{itemize}

In particular, the set $\Th(\scrT)$ has the topology whose closed subsets are of the form $Z(\scrE)$.

\subsection{Balmer spectrum}~

In this subsection, we recall Balmer spectra of tensor triangulated categories following \cite{balmer2} and \cite{balmer}. First, we recall the definition of tensor triangulated categories. 

\begin{dfn}[{\cite[Definition 1.1]{balmer2}}]
A tensor triangulated category $(\scrT, \otimes)$ is a symmetric monoidal category with the unit object $\mathbf{1}_{\scrT}$ such that $\scrT$ is a triangulated category and the bi-functor $\otimes \colon \scrT \times \scrT \to \scrT$ is exact in each variable. 
\end{dfn}

Let $(\scrT, \otimes)$ be a tensor triangulated category.
A thick subcategory $\scrI$ of $\scrT$ is called an {\it ideal} of $(\scrT, \otimes)$ if $A \otimes E \in \scrI$ holds for any objects $A \in \scrT$ and $E \in \scrI$. 
For an ideal $\scrI$ of $(\scrT, \otimes)$,  we define the {\it radical} $\sqrt{\scrI}$ of $\scrI$ as 
\[ \sqrt{\scrI}\defeq\{ E \in \scrT \mid E^{\otimes n} \in \scrI \ \text {for some positive integer $n$}  \}. \]
An ideal $\scrI$ of $(\scrT, \otimes)$ is said to be {\it radical}  if $\sqrt{\scrI}=\scrI$ holds.

\begin{dfn}[{\cite[Definition 2.1]{balmer2}}]
An ideal $\scrP\neq \scrT$ of $(\scrT, \otimes)$ is said to be  {\it prime}  if for objects $A,B \in \scrT$, the condition $A \otimes B \in \scrP$ implies $A\in \scrP$ or $B \in \scrP$.
The set of all prime ideals of $(\scrT,\otimes)$ is denoted by 
\[
\Spec_{\otimes}\scrT.
\]  
We regard $\Spec_\otimes\scrT$ as a topological subspace of $\Th(\scrT)$. The topological space $\Spec_\otimes\scrT$ is called the {\it Balmer spectrum} of $(\scrT, \otimes)$.
\end{dfn}

Let $X$ be a noetherian scheme. For an object $E \in\Perf X$, we define the {\it support} $\Supp E$ of the object $E$ as
\[
\Supp E\defeq\{x\in X\mid E_x\not\cong 0 \mbox{ in }\Perf\cO_{X,x}\}.
\]

Note that 
$\Supp E=\bigcup_{i\in \bZ}\Supp \cH^i(E)$. Since the support  of each sheaf cohomology $\cH^i(E)$  is closed in $X$, and $\cH^i(E)\neq0$ only for finitely many $i\in\bZ$, the support $\Supp E$ is closed in $X$.

\begin{dfn}
For $x \in X$, we define the ideal $\scrS_X(x)$ of $(\Perf X, \otimes_X)$ as
\[
\scrS_X(x)\defeq\{E \in \Perf X \mid x\notin \Supp(E)\}.
\]
\end{dfn}

The following property is important in the proof of Theorem \ref{thm:main 2}.

\begin{rem}\label{rem:torsion}
Take $x \in X$.
For any object $E \in \scrS_X(x)$, we have $\rk E=0$, where $\rk E\defeq \sum_{i\in \bZ}(-1)^i\rk\cH^i(E)$.
\end{rem}

In \cite{balmer}, Balmer reconstructs a noetherian scheme $X$ from the tensor triangulated category $(\Perf X,\otimes_{X})$. If no confusion can arise, we simply write $\otimes$ for $\otimes_X$.

\begin{thm}[{\cite[Theorem 6.3 (a)]{balmer}}]\label{thm:scheme vs Balmer}
Let $X$ be a noetherian scheme. There is a homeomorphism
\[ \scrS_X \colon X \simto \Spec_\otimes \Perf X,~x \mapsto \scrS_X(x) \]
of ringed spaces.
\end{thm}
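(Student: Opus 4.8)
The statement is Balmer's reconstruction theorem for $\Perf X$, so the plan is to follow his two‑stage strategy: first identify the underlying topological spaces, then the structure sheaves. The two substantive external inputs I would grant are Thomason's classification of thick $\otimes$‑ideals of $\Perf X$ and the Thomason--Neeman localization theorem for perfect complexes; modulo these, the argument is organized bookkeeping with supports. For the topology, I would first check that $\scrS_X(x)$ is a prime ideal of $(\Perf X,\otimes_X)$: it is a thick $\otimes$‑ideal since $\Supp(A\otimes_X E)\subseteq\Supp E$, it is proper since $x\in X=\Supp\cO_X$, and it is prime because of the identity $\Supp(E\otimes_X F)=\Supp E\cap\Supp F$ for $E,F\in\Perf X$, which I would verify stalkwise (over a noetherian local ring the derived tensor product of two nonzero perfect complexes is nonzero, by applying Nakayama to the lowest nonvanishing $\Tor$). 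Injectivity of $\scrS_X$: if $x\neq y$ then, working in an affine open neighbourhood $\Spec R$ of one of them, the Koszul complexes $K(f_1,\dots,f_n)$ realize the closed sets $V(f_1,\dots,f_n)$ as supports of perfect complexes, and distinct primes of $R$ are separated by such closed sets. For surjectivity I would invoke Thomason's classification: every thick $\otimes$‑ideal of $\Perf X$ equals $\Perf_Z X\defeq\{E\in\Perf X\mid\Supp E\subseteq Z\}$ for a unique specialization‑closed $Z\subsetneq X$; combined with the support formula, one finds $\Perf_Z X$ is prime exactly when $X\setminus Z$ is the set of generizations of a (necessarily unique) point $x$, and then $\Perf_Z X=\scrS_X(x)$. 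Finally $\scrS_X$ is a homeomorphism: for $E\in\Perf X$ one has $\scrS_X^{-1}(\{\scrP\mid E\notin\scrP\})=\Supp E$, which is closed in $X$, so $\scrS_X$ is continuous; and since in the noetherian case every closed subset of $X$ is the support of some perfect complex (locally a Koszul complex, patched via Thomason's support machinery), $\scrS_X$ is also closed, hence a homeomorphism.

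For the structure sheaves, recall that $\cO_{\scrT,\otimes}$ is the sheafification of the presheaf on $\Spec_\otimes\Perf X$ whose value on a principal open $U(E)\defeq\{\scrP\mid E\in\scrP\}$ is $\End_{(\Perf X/\langle E\rangle)^{\natural}}(\mathbf 1)$, where $\langle E\rangle$ is the thick $\otimes$‑ideal generated by $E$ and $(-)^{\natural}$ denotes idempotent completion. Since $\langle E\rangle=\Perf_{\Supp E}X$, the Thomason--Neeman localization theorem gives an equivalence $(\Perf X/\Perf_{\Supp E}X)^{\natural}\simeq\Perf(X\setminus\Supp E)$ carrying the unit to $\cO_{X\setminus\Supp E}$, so the presheaf value is $\Hom_{\Db(X\setminus\Supp E)}(\cO,\cO)=\Gamma(X\setminus\Supp E,\cO_X)$. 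The principal opens $U(E)$ form a basis of $\Spec_\otimes\Perf X$, and under $\scrS_X$ they correspond exactly to the opens $X\setminus\Supp E$, which form a basis of $X$; since the structure presheaves agree on these bases, so do their sheafifications, and therefore $\scrS_X$ is an isomorphism of ringed spaces.

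The genuinely hard content is concentrated in the two black boxes used above: Thomason's classification of thick $\otimes$‑ideals (driving surjectivity of $\scrS_X$) and the Thomason--Neeman localization $\Perf X/\Perf_Z X\simeq\Perf(X\setminus Z)^{\natural}$ (driving the structure‑sheaf identification). Everything else---primeness and injectivity via the support formula and Koszul complexes, and the computation $\End(\mathbf 1)=\Gamma(-,\cO_X)$---is routine, so in a self‑contained treatment the real work would be establishing these two localization/classification statements.
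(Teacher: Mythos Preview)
The paper does not give its own proof of this theorem: it is simply quoted from Balmer \cite[Theorem 6.3 (a)]{balmer} as background, with no argument supplied. There is therefore nothing in the paper to compare your proposal against.

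That said, your sketch is a faithful outline of Balmer's original proof: primeness of $\scrS_X(x)$ via the support formula $\Supp(E\otimes_X F)=\Supp E\cap\Supp F$, injectivity via Koszul complexes, surjectivity via Thomason's classification of thick $\otimes$-ideals, and the structure-sheaf identification via Thomason--Neeman localization $(\Perf X/\Perf_Z X)^{\natural}\simeq\Perf(X\setminus Z)$. The only point worth tightening is your claim that every closed subset of a noetherian scheme is the support of a perfect complex; this is not literally true in general (one gets Thomason subsets, i.e.\ unions of closed subsets with quasi-compact open complement), but in the noetherian case every closed subset is Thomason, so the distinction is harmless here. Otherwise the argument is sound and is exactly the route the cited reference takes.
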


\subsection{Matsui spectrum}~

In this subsection, we recall the definition of Matsui spectra following \cite{matsui} and \cite{matsui2}. 
Let $\scrT$ be a triangulated category.

\begin{dfn}[{\cite[Definition 2.2]{matsui}},{\cite[Definition 2.8]{matsui2}}]\label{def:prime thick subcategory}
A thick subcategory $\scrP$ of $\scrT$ is said to be {\it prime} if there exists the smallest element $\overline{\scrP}$ in the partially orderd set $ \{\scrQ\in\Th(\scrT)\mid \scrP\subsetneq\scrQ\}$. We denote by  
\[
\Spec_{\triangle}\scrT
\] 
the set of prime thick subcategories of $\scrT$. The set $\Spec_{\triangle}\scrT$ has the induced topology from $\Th(\scrT)$. The topological space $\Spec_{\triangle}\scrT$ is called the {\it Matsui spectrum} of $\scrT$.
\end{dfn}

Definition \ref{def:prime thick subcategory} gives a generalization of prime ideals of a tensor triangulated category of the form $(\Perf X,\otimes_X)$, where $X$ is a noetherian scheme.  

\begin{prop}[{\cite[Theorem 1.4]{matsui}}]\label{prop:Matusi vs Balmer}
Let $(\scrT, \otimes)$ be a tensor triangulated category.
For a radical ideal $\scrP$ of $(\scrT, \otimes)$, if $\scrP$ is a prime thick subcategory, then $\scrP$ is a prime ideal of $(\scrT, \otimes)$. When $(\scrT, \otimes)=(\Perf X, \otimes_X)$ holds for some noetherian scheme, the converse holds. 
\end{prop}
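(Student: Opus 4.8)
The plan is to prove the two halves of the proposition separately.

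\textbf{(1) A radical prime thick subcategory is a prime $\otimes$-ideal.} Let $\scrP$ be a radical ideal of $(\scrT,\otimes)$ that is a prime thick subcategory, with $\overline{\scrP}$ the smallest thick subcategory properly containing $\scrP$, and suppose for contradiction that $A\otimes B\in\scrP$ with $A,B\notin\scrP$. I would let $\scrJ_A$ (resp.\ $\scrJ_B$) denote the $\otimes$-ideal generated by $\scrP\cup\{A\}$ (resp.\ $\scrP\cup\{B\}$) and set $\scrI_A:=\sqrt{\scrJ_A}$, $\scrI_B:=\sqrt{\scrJ_B}$; by Balmer's radical calculus these are radical $\otimes$-ideals, and they properly contain $\scrP$ (as $A\in\scrI_A\setminus\scrP$ and $B\in\scrI_B\setminus\scrP$), so $\overline{\scrP}\subseteq\scrI_A\cap\scrI_B$. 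On the other hand one checks $\scrJ_A\otimes\scrJ_B\subseteq\scrP$: for a generator $Y$ of $\scrJ_B$ the full subcategory $\{X\mid X\otimes Y\in\scrP\}$ is thick and contains $\scrP$ together with every $A\otimes s$ (using that $\scrP$ is an ideal and that $A\otimes B\in\scrP$), hence contains $\scrJ_A$; repeating the argument in the second variable gives $X\otimes Y\in\scrP$ for all $X\in\scrJ_A$, $Y\in\scrJ_B$, so the ideal $\scrJ_A\otimes\scrJ_B$ generated by these products lies in $\scrP$. Combining via $\sqrt{\scrJ_A}\cap\sqrt{\scrJ_B}=\sqrt{\scrJ_A\otimes\scrJ_B}$ then yields $\scrI_A\cap\scrI_B\subseteq\sqrt{\scrP}=\scrP$, contradicting $\overline{\scrP}\supsetneq\scrP$; hence $\scrP$ is a prime $\otimes$-ideal (it is proper since $\scrT$ is never prime thick).

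\textbf{(2) Over $\Perf X$, a prime $\otimes$-ideal is a prime thick subcategory.} A prime $\otimes$-ideal is automatically radical, so by Theorem~\ref{thm:scheme vs Balmer} it equals $\scrS_X(x)$ for a unique point $x\in X$. I would first use the classification of thick $\otimes$-ideals of $\Perf X$ by specialization-closed (Thomason) subsets of $X$: writing $Z_x:=\{y\in X\mid x\notin\overline{\{y\}}\}$, one has $\scrS_X(x)=\{E\mid\Supp E\subseteq Z_x\}$, and any specialization-closed $Z\supsetneq Z_x$ necessarily contains $\overline{\{x\}}$, so the smallest thick $\otimes$-ideal properly containing $\scrS_X(x)$ is $\scrS_X^+(x):=\{E\mid\Supp E\subseteq Z_x\cup\overline{\{x\}}\}$. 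The remaining task is to show $\scrS_X^+(x)$ is in fact the smallest thick \emph{subcategory} properly containing $\scrS_X(x)$, i.e.\ that every thick subcategory $\scrC\supsetneq\scrS_X(x)$ contains $\scrS_X^+(x)$, which gives $\overline{\scrS_X(x)}=\scrS_X^+(x)$ and finishes the proof. For this I would localize at $x$: the functor $(-)_x\colon\Perf X\to\Perf\cO_{X,x}$ is, up to idempotent completion, a Verdier localization with kernel $\scrS_X(x)$, and it carries $\scrS_X^+(x)$ onto $\Perf_{\{\m_x\}}\cO_{X,x}$, the subcategory of perfect $\cO_{X,x}$-complexes supported at the closed point $\m_x$. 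Given $\scrC\supsetneq\scrS_X(x)$, choose $E\in\scrC$ with $E_x\neq0$; since $\cO_{X,x}$ is local, $\Supp(E_x)$ is nonempty and closed, hence contains $\m_x$, and iterated cones of multiplication by a generating sequence of $\m_x$ produce, inside the thick subcategory generated by $E_x$, a nonzero perfect complex supported at $\{\m_x\}$, which then generates all of $\Perf_{\{\m_x\}}\cO_{X,x}$; transporting this back through the localization gives $\scrS_X^+(x)\subseteq\scrC$.

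The hard part will be this last step of (2): the classification theorem only controls thick $\otimes$-\emph{ideals}, so one must separately rule out a thick subcategory that is \emph{not} a $\otimes$-ideal from being squeezed strictly between $\scrS_X(x)$ and $\scrS_X^+(x)$. The localization-to-$\cO_{X,x}$ reduction together with the Koszul/Nakayama argument above is meant to do exactly this, but it requires genuine care with the (non-)idempotent-completeness of the Verdier quotient $\Perf X/\scrS_X(x)$, and with the distinction between the thick subcategory and the $\otimes$-ideal generated by a single object.
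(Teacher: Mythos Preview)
The paper does not give its own proof of this proposition: it is stated as a citation of \cite[Theorem~1.4]{matsui} and no argument is supplied. So there is nothing in the present paper to compare your sketch against; your proposal is essentially a reconstruction of Matsui's original proof.

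On the substance of your outline: Part~(1) is correct, though when you argue that $\{X\mid X\otimes Y\in\scrP\}$ contains $\scrJ_A$, you should note that this subcategory is not merely thick but a $\otimes$-ideal (because $\scrP$ is), which is what lets you pass from ``contains $\scrP$ and $A$'' to ``contains the \emph{ideal} generated by them''. Your use of $\sqrt{\scrJ_A}\cap\sqrt{\scrJ_B}=\sqrt{\scrJ_A\cdot\scrJ_B}$ is fine and is exactly Balmer's radical calculus.

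For Part~(2), your plan is the right one and matches Matsui's, but you have correctly flagged the delicate point. The Koszul step goes through: for $E_x\neq0$ one has $\m_x\in\Supp E_x$, and tensoring with the Koszul complex on generators of $\m_x$ produces, via iterated cones on endomorphisms of $E_x$, a nonzero perfect complex supported at $\{\m_x\}$ inside the thick subcategory generated by $E_x$. The genuine work is then showing that any such nonzero object generates all of $\Perf_{\{\m_x\}}\cO_{X,x}$ as a \emph{thick subcategory} (not merely as a $\otimes$-ideal), and then transporting this back along $\Perf X\to\Perf\cO_{X,x}$, where you must contend with the failure of idempotent completeness of the Verdier quotient $\Perf X/\scrS_X(x)$. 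Both points are handled in \cite{matsui} (using, in effect, Hopkins--Neeman locally together with the fact that over a local ring the thick subcategory and the thick ideal generated by a perfect complex with closed support coincide), so your plan is sound, but these are precisely the details you would need to supply rather than leave as a reduction.
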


A thick subcategory $\scrP$ of $\scrT$ is said to be {\it maximal} if  $\scrP$ is maximal in the partially ordered set $\Th(\scrT)\setminus \{ \scrT \}$.
Note that a maximal thick subcategory of $\scrT$ is prime.
An object $G$ of $\scrT$ is called a {\it split generator} of $\scrT$ if the smallest thick subcategory of $\scrT$ containing $G$ is equal to $\scrT$. 
For the non-emptyness of $\Spec_{\triangle}\scrT$, the following holds.

\begin{prop}[{\cite[Proposition 2.15]{ho2}}]\label{prop:non-empty}
Assume that $\scrT$ has a split generator $G$.
Then for any thick subcategory $\scrU\subsetneq\scrT$, there is a maximal thick subcategory of $\scrT$ containing $\scrU$.
In particular, the Matsui spectrum $\Spec_{\triangle}\scrT$ of $\scrT$ is non-empty.
\end{prop}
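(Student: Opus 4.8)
The plan is to run a standard Zorn's-lemma argument on the partially ordered set
\[ P \defeq \{ \scrC \in \Th(\scrT) \mid \scrU \subseteq \scrC \subsetneq \scrT \}, \]
ordered by inclusion. This set is non-empty because $\scrU \in P$, so it suffices to show that every chain in $P$ admits an upper bound in $P$; Zorn's lemma then produces a maximal element $\scrP$ of $P$, and one checks immediately that $\scrP$ is in fact maximal in $\Th(\scrT)\setminus\{\scrT\}$: any thick subcategory strictly between $\scrP$ and $\scrT$ still contains $\scrU$, hence lies in $P$, contradicting maximality of $\scrP$ in $P$.

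Given a chain $\{\scrC_\lambda\}_\lambda$ in $P$, the candidate upper bound is $\scrC \defeq \bigcup_\lambda \scrC_\lambda$. First I would verify that $\scrC$ is a thick subcategory: it is a full subcategory closed under shifts since each $\scrC_\lambda$ is; for a distinguished triangle two of whose terms lie in $\scrC$, those two terms already lie in a common $\scrC_\lambda$ because the index set is totally ordered, so the third term lies in that $\scrC_\lambda \subseteq \scrC$; and if $A\oplus B\in\scrC$ then $A\oplus B\in\scrC_\lambda$ for some $\lambda$, whence $A,B\in\scrC_\lambda\subseteq\scrC$. This part is routine. Clearly $\scrU\subseteq\scrC$ as well.

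The only step that genuinely uses the hypothesis — and hence the crux of the proof — is showing $\scrC \neq \scrT$, i.e.\ that $\scrC$ stays inside $P$. Here I would argue by contradiction: if $\scrC = \scrT$, then the split generator $G$ lies in $\scrC$, so $G \in \scrC_\lambda$ for some $\lambda$; but then the thick subcategory $\scrC_\lambda$ contains $G$, so $\scrC_\lambda = \scrT$, contradicting $\scrC_\lambda \in P$. Thus $\scrC \in P$ is an upper bound for the chain, and Zorn's lemma applies. Finally, for the last assertion I would apply the first part with $\scrU$ the zero subcategory (a proper thick subcategory, since $\scrT \neq 0$); the maximal thick subcategory obtained this way is prime, as already noted, so $\Spec_{\triangle}\scrT$ is non-empty. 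I do not expect any real obstacle here: the argument is entirely formal, and the split-generator hypothesis enters at exactly one point, to guarantee that an increasing union of proper thick subcategories along a chain remains proper.
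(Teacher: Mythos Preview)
Your argument is correct and is exactly the standard Zorn's-lemma proof one would expect. Note, however, that the present paper does not actually prove this proposition: it is quoted from \cite[Proposition~2.15]{ho2} without proof, so there is no ``paper's own proof'' to compare against. The argument you give is precisely the one that underlies the cited result, so nothing further is needed.
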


\begin{exa}[{\cite[Theorem 4]{orlov}}]\label{ex:split generator}
Let $X$ be a smooth projective variety of dimension $d$. 
Take a very ample line bundle $\cO_X(1)$ on $X$. 
Then the object
\[ G\defeq\oplus_{i=0}^{d}\cO_X(i)\]
is a split generator of $\Db(X)$.

For an admissible subcategory $\scrA$ of $\Db(X)$ with the inclusion functor $\iota \colon \scrA \to \Db(X)$, the object $\iota^*(G)$ is a split generator of $\scrA$, where $\iota^* \colon \Db(X) \to \scrA$ is the left adjoint functor of $\iota$.
\end{exa}

By Proposition \ref{prop:non-empty} and Example \ref{ex:split generator}, we have the following statement.

\begin{cor}[{\cite[Corollary 2.16]{ho2}}]\label{cor:empty}
Let $X$ be a smooth projective variety.
For a non-zero admissible subcategory $\scrA$ of $\Db(X)$, there is a maximal thick subcategory of $\scrA$. In particular, the Matsui spectrum $\Spec_{\triangle}\scrA$ of $\scrA$ is non-empty.
\end{cor}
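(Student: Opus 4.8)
The plan is to deduce the statement directly from Proposition \ref{prop:non-empty} and Example \ref{ex:split generator}, with essentially no extra work. First I would use that $\scrA$ is \emph{admissible} in $\Db(X)$: this means the inclusion functor $\iota \colon \scrA \hookto \Db(X)$ admits a left adjoint $\iota^* \colon \Db(X) \to \scrA$. Fix a very ample line bundle $\cO_X(1)$ on $X$, set $d \defeq \dim X$, and let $G \defeq \oplus_{i=0}^{d}\cO_X(i)$. By Example \ref{ex:split generator}, $G$ is a split generator of $\Db(X)$, and hence $\iota^*(G)$ is a split generator of $\scrA$. Thus $\scrA$ is an (essentially small) triangulated category possessing a split generator.

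Next I would invoke Proposition \ref{prop:non-empty} with $\scrT = \scrA$. Since $\scrA$ is non-zero, the zero subcategory $\{0\}$ is a thick subcategory of $\scrA$ with $\{0\} \subsetneq \scrA$, so the proposition produces a maximal thick subcategory of $\scrA$ (containing $\{0\}$). Finally, recalling the observation made just before Proposition \ref{prop:non-empty} that every maximal thick subcategory is prime, this maximal thick subcategory is an element of $\Spec_{\triangle}\scrA$, so the Matsui spectrum of $\scrA$ is non-empty.

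There is no genuine obstacle in this argument: the real content lies in the two ingredients being cited — the existence of a split generator for an admissible subcategory (Example \ref{ex:split generator}, ultimately Orlov's theorem) and the Zorn-type construction of a maximal thick subcategory above a given proper one (Proposition \ref{prop:non-empty}) — and the corollary is just their combination. The one point I would take care to state explicitly is that admissibility, not merely thickness, is what is being used: it is precisely the existence of the left adjoint $\iota^*$ that lets us transport the split generator $G$ of $\Db(X)$ to a split generator $\iota^*(G)$ of $\scrA$.
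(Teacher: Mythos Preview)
Your argument is correct and matches the paper's approach exactly: the paper does not give a separate proof but simply states that the corollary follows from Proposition~\ref{prop:non-empty} and Example~\ref{ex:split generator}, which is precisely the combination you spell out.
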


\subsection{Serre invariant locus}~

In this subsection, we introduce the Serre invariant locus of Matsui spectrum of a triangulated category following \cite{ho}.

Let $\cT$ be a triangulated category with finite dimensional Hom-spaces. 
A {\it Serre functor} $S\colon \scrT \simto \scrT$ of $\scrT$ is an exact autoequivalence such that for any objects $A,B\in \scrT$, there is a functorial isomorphism
\[
\Hom_{\scrT}(A,B)\cong \Hom_{\scrT}(B,S(A))^*,
\]
of vector spaces, where $(-)^*$ denotes the dual of a vector space over $k$. It is standard that  Serre functors, if they exist, are unique up to functor isomorphisms (see \cite[Section 1.1]{huybre}).  

\begin{dfn}[{\cite[Definition 5.1]{ho}}]\label{def:serre}
Assume that $\scrT$ admits a Serre functor $S \colon \scrT \simto \scrT$.
A prime thick subcategory $\scrP$ of $\scrT$ is said to be {\it Serre invariant}, if $S(\scrP)=\scrP$.  We denote by 
\[
\Spec_{\triangle}^{\Ser}\scrT
\]
the set of all Serre invariant prime thick subcategories of $\scrT$.
\end{dfn}


\begin{rem}[{\cite[Corollary 5.4]{ho}}]\label{rem:CY and Serre}
Let $X$ be a smooth projective variety.
\begin{itemize}
\item[(1)] Assume that $X$ is a Calabi-Yau variety, that is $\omega_X \simeq \cO_X$. Then the shift functor 
\[ [\dim X] \colon \Db(X) \simto \Db(X)\]
is a Serre functor of $\Db(X)$.  We have
\[ \Spec_{\triangle}^{\Ser}\Db(X)=\Spec_{\triangle}\Db(X). \]
\item[(2)] Assume that $\omega_X$ or $\omega^{-1}_X$ is ample. Then we have
\[ \Spec_{\triangle}^{\Ser}\Db(X)=\Spec_{\otimes}\Db(X). \]
\end{itemize}
\end{rem}

In Remark \ref{rem:CY and Serre} (2), the Serre invariant locus is strictly smaller than the Matsui spectrum in general, e.g., Example \ref{ex:known case} (1).  

\begin{prop}\label{prop:high genus}
Let $X$ be a smooth projective curve of genus $g \geq 2$.
Then we have
\[ \Spec_{\triangle}\Db(X) \setminus \Spec^{\Ser}_{\triangle}\Db(X) \neq \emptyset. \]
\end{prop}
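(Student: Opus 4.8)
The plan is to exhibit a prime thick subcategory of $\Db(X)$ that is not Serre invariant; equivalently, using Remark~\ref{rem:CY and Serre}~(2), to exhibit a prime thick subcategory which is not of the form $\scrS_X(x)$ for $x \in X$, since for a curve of genus $g \geq 2$ the anticanonical-or-canonical ampleness applies and $\Spec^{\Ser}_{\triangle}\Db(X) = \Spec_{\otimes}\Db(X)$. First I would recall that $\Db(X)$ for a smooth projective curve has a well-understood lattice of thick subcategories: by work on triangulated categories of curves (and as used in \cite{matsui2}, \cite{ho}), the thick subcategories of $\Db(X)$ are classified, and in particular $\Db(X)$ contains the thick subcategory $\scrD_Z$ of complexes supported on a finite set $Z$, whose prime members recover the points of $X$, but it also contains thick subcategories generated by a single indecomposable object such as a line bundle or a stable bundle. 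The key point is that for $g \geq 2$ the category is ``large enough'' that not every maximal (hence prime) thick subcategory is a support category: the Balmer spectrum $\Spec_\otimes \Db(X) \cong X$ is only a $1$-dimensional piece, while $\Th(\Db(X))$ has maximal elements lying off it.

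The cleanest route is to invoke Corollary~\ref{cor:empty} together with a generator argument. Take $G = \cO_X \oplus \cO_X(1)$, a split generator of $\Db(X)$. Consider a thick subcategory $\scrU$ that is \emph{not} contained in any $\scrS_X(x)$ — for instance, I would take $\scrU = \langle \cO_X \rangle$, the thick closure of the structure sheaf, or more robustly a thick subcategory containing an object of nonzero rank. By Remark~\ref{rem:torsion}, every object of every $\scrS_X(x)$ has rank $0$; hence any thick subcategory containing an object of nonzero rank (e.g.\ $\cO_X$ itself) is not contained in any $\scrS_X(x)$. By Proposition~\ref{prop:non-empty}, $\scrU$ is contained in some maximal thick subcategory $\scrM \subsetneq \Db(X)$, and $\scrM$ is prime. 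Since $\scrS_X(x)$ is maximal among proper thick ideals but we only know it is prime as a thick subcategory, I would instead argue directly: $\scrM \supseteq \scrU$ contains an object of nonzero rank, so $\scrM \neq \scrS_X(x)$ for every $x$; therefore $\scrM \in \Spec_{\triangle}\Db(X) \setminus \Spec_\otimes\Db(X) = \Spec_{\triangle}\Db(X)\setminus\Spec^{\Ser}_{\triangle}\Db(X)$, using Remark~\ref{rem:CY and Serre}~(2) and Theorem~\ref{thm:scheme vs Balmer} to identify $\Spec_\otimes\Db(X)$ with the set $\{\scrS_X(x)\}_{x\in X}$.

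The main obstacle is verifying that such a maximal $\scrM$ is genuinely a \emph{proper} prime thick subcategory and is not accidentally forced to coincide with some Balmer prime — i.e.\ confirming that $\Spec_\otimes \Db(X)$ really is $\{\scrS_X(x)\}_{x\in X}$ and that the rank invariant distinguishes $\scrM$ from all of these. Here one must be careful that $g \geq 2$ is actually used: for $g = 0$ or $g = 1$ the classification of thick subcategories in Example~\ref{ex:known case} shows extra prime thick subcategories also exist, so the rank obstruction alone is not special to $g\geq 2$; the role of $g \geq 2$ is rather that the canonical bundle is ample, so that $\Spec^{\Ser}_{\triangle} = \Spec_\otimes$ by Remark~\ref{rem:CY and Serre}~(2), whereas for $g \leq 1$ the Serre functor is (a twist of) a shift and $\Spec^{\Ser}_{\triangle}$ is larger. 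Thus the genus hypothesis enters only through Remark~\ref{rem:CY and Serre}~(2), and the existence of a prime thick subcategory outside $\Spec_\otimes$ follows from the generator/rank argument above, which works for any smooth projective curve (indeed any smooth projective variety of positive dimension). I would conclude by remarking that the same argument, combined with the inclusion $\Spec^{\FM}_{\triangle} \subset \Spec^{\Ser}_{\triangle}$, does not settle the finer question of Theorem~\ref{thm:main 2}, which requires the more delicate Picard-number-one analysis carried out later in the paper.
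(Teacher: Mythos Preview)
Your overall strategy coincides with the paper's: take $\scrU=\langle\cO_X\rangle$, enlarge it to a maximal thick subcategory $\scrM$ via Proposition~\ref{prop:non-empty}, and then use the rank obstruction (Remark~\ref{rem:torsion}) together with Remark~\ref{rem:CY and Serre}~(2) to conclude $\scrM\notin\Spec_{\otimes}\Db(X)=\Spec^{\Ser}_{\triangle}\Db(X)$. Your final step is in fact slightly more direct than the paper's, which instead argues that $\scrM$ is not a thick \emph{ideal} by citing \cite[Corollary~4.6]{ho}; both routes are valid.

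However, there is a genuine gap. Proposition~\ref{prop:non-empty} requires $\scrU\subsetneq\Db(X)$, i.e.\ that $\cO_X$ is \emph{not} a split generator of $\Db(X)$. You never verify this, and it is exactly the nontrivial input in the paper's proof: the paper observes that $\cO_X$ is ``vertex like'' in the sense of \cite[Definition~3.7]{el} and then invokes \cite[Proposition~3.9]{el} to conclude that $\cO_X$ does not split-generate. Without this step your argument does not get off the ground --- if $\langle\cO_X\rangle=\Db(X)$ there is no proper thick subcategory containing $\cO_X$ to enlarge. Your remark that ``the main obstacle is verifying that such a maximal $\scrM$ is genuinely a proper prime thick subcategory'' grazes the issue but then veers off to the separate (and easier) question of distinguishing $\scrM$ from the $\scrS_X(x)$. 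Relatedly, your parenthetical claim that the argument ``works for any smooth projective curve (indeed any smooth projective variety of positive dimension)'' is unjustified as written: one must still exhibit, in each case, an object of nonzero rank that fails to split-generate, and this is not automatic.
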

\begin{proof}
 Note that $\cO_X$ is vertex like in the sense of \cite[Definition 3.7]{el}. By \cite[Proposition 3.9]{el}, $\cO_X$ is not a split generator of $\Db(X)$. By Proposition \ref{prop:non-empty}, there is a maximal thick  subcategory $\scrM$ containing $\cO_X$. Since $\scrM$ is torsion-free, it is not an ideal of $(\Db(X),\otimes)$ by \cite[Corollary 4.6]{ho}. Hence $\scrM\not\in\Spec_{\otimes}\Db(X)=\Spec_{\triangle}^{\Ser}\Db(X)$ by Remark \ref{rem:CY and Serre}.
\end{proof}

\subsection{Fourier-Mukai locus}~

In this subsection, we introduce the Fourier-Mukai locus of a triangulated category following \cite{ito}.
Let $\scrT$ be a triangulated category.

\begin{dfn}\label{def:Fourier-Mukai partner}
Denote by $\FM(\scrT)$  the set of isomorphism classes of a smooth projective variety $X$ with $\scrT \simeq \Db(X)$.
Similarly, we write $\pFM(\scrT)$ for the set of isomorphism classes of a smooth proper variety $X$ with $\scrT \simeq \Db(X)$. A variety $X$ is a {\it Fourier-Mukai partner} of $\scrT$ if $X \in \FM(\scrT)$ holds. Similarly, a variety $X$ is a {\it proper Fourier-Mukai partner} of $\scrT$ if $X \in \pFM(\scrT)$ holds.
\end{dfn}

For $\pFM(\scrT)$ in Definition \ref{def:Fourier-Mukai partner}, see {\cite[Observation 6.16]{ito}}. 
For a smooth proper variety $X$, we consider the set $\Eq(\Db(X), \scrT)$ of isomorphism classes of exact equivalences from $\Db(X)$ to $\scrT$.

\begin{dfn}[{\cite[Construction 3.5]{ito}}]\label{def:tt-spectrum of FM}
Let $X$ be a smooth proper variety with $\scrT \simeq \Db(X)$. 
We define the topological subspace $\Spec_{X}\scrT$ of $\Spec_{\triangle}\scrT$ as 
\[ \Spec_{X}\scrT\defeq\bigcup_{\Phi \in \Eq(\Db(X), \scrT)}\Phi\left(\Spec_{\otimes}\Db(X)\right). \]
The topological subspace $\Spec_{X}\scrT $ is called the {\it tensor triangulated spectrum of a Fourier-Mukai partner} $X$ of $\scrT$. 
\end{dfn}

Note that the autoequivalence group $\Aut\scrT$ acts on $\Spec_{X}\scrT$ for $X \in \pFM(\scrT)$. Ito \cite{ito} introduced the following topological subspace of $\Spec_{\triangle}(\scrT)$.

\begin{dfn}[{\cite[Definition 4.1]{ito}}]\label{def:FM locus}
We define the {\it Fourier-Mukai locus} $\Spec^{\FM}_{\triangle}\scrT$ of $\scrT$ as
\[ \Spec^{\FM}_{\triangle}\scrT\defeq\bigcup_{X \in \FM(\scrT)}\Spec_{X}\scrT.  \]
\end{dfn}

It is natural to consider the proper version of Definition \ref{def:FM locus}.

\begin{dfn}[{\cite[Observation 6.16]{ito}}]\label{def:pFM}
We define the {\it proper Fourier-Mukai locus} $\Spec^{\pFM}_{\triangle}\scrT$ of $\scrT$ as
\[ \Spec^{\pFM}_{\triangle}\scrT\defeq\bigcup_{X \in \pFM(\scrT)}\Spec_{X}\scrT.  \]
\end{dfn}
Note that the autoequivalence group $\Aut\scrT$ of $\scrT$ acts on $\Spec^{\FM}_{\triangle}\scrT$ and $\Spec^{\FM}_{\triangle}\scrT$. By \cite[Observation 6.16]{ito}, we have
\[ \Spec^{\FM}_{\triangle}\scrT=\Spec^{\pFM}_{\triangle}\scrT \] 
if $\scrT \simeq \Db(X)$ holds for some smooth projective variety $X$ with $\dim X \leq 2$.



The following is the relation among the Serre invariant locus $\Spec^{\Ser}_{\triangle}\scrT$ and the Fourier-Mukai loci $\Spec^{\FM}_{\triangle}\scrT$ and $\Spec^{\pFM}_{\triangle}\scrT$.

\begin{rem}[{\cite[Corollary 6.3, Observation 6.16]{ito}}]
Assume that $\scrT$ admits a Serre functor.
We have the following inclusions
\[ \Spec^{\FM}_{\triangle}\scrT \subset \Spec^{\pFM}_{\triangle}\scrT \subset \Spec^{\Ser}_{\triangle}\scrT \subset \Spec_{\triangle}\scrT. \]
\end{rem}

Ito  proposed the following conjectures.

\begin{conj}[{\cite[Conjecture 6.14]{ito}}]\label{conj:strong}
Assume that $\FM(\scrT)$ is non-empty.
Then we have
\[ \Spec^{\FM}_{\triangle}\scrT=\Spec^{\Ser}_{\triangle}\scrT. \]
\end{conj}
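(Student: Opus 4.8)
As stated, the conjecture does not hold: the paper's Theorem~\ref{thm:main 2} exhibits a counterexample among the K3 surfaces of Picard number one, so I describe how I would establish that failure. Fix such an $X$, with $\mathrm{NS}(X)=\bZ H$ and $H^2=2d$. Since $\omega_X\cong\cO_X$, Remark~\ref{rem:CY and Serre}(1) gives $\Spec^{\Ser}_{\triangle}\Db(X)=\Spec_{\triangle}\Db(X)$, so it is enough to produce a single prime thick subcategory of $\Db(X)$ that does not lie in $\Spec^{\FM}_{\triangle}\Db(X)$.

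The driving mechanism is a numerical obstruction to membership in $\Spec^{\FM}_{\triangle}\Db(X)$. Every point of this locus has the form $\Phi\bigl(\scrS_Y(y)\bigr)$ for some Fourier--Mukai partner $Y\in\FM(X)$, some (not necessarily closed) point $y\in Y$, and some equivalence $\Phi\colon\Db(Y)\simto\Db(X)$. If $E\in\Phi(\scrS_Y(y))$, then $y\notin\Supp(\Phi^{-1}E)$, hence $\Supp(\Phi^{-1}E)\subsetneq Y$ and $\rk(\Phi^{-1}E)=0$ by Remark~\ref{rem:torsion}. Passing to Mukai vectors and using that the cohomological transform $\Phi_*\colon\widetilde{H}(Y)\to\widetilde{H}(X)$ is a Hodge isometry carrying algebraic classes to algebraic classes, one gets $\rk(\Phi^{-1}E)=-\langle v(E),w_\Phi\rangle$, where $w_\Phi\defeq\Phi_*(0,0,1)$ is a \emph{primitive isotropic} vector of the algebraic Mukai lattice $\widetilde{H}_{\mathrm{alg}}(X)\cong U\oplus\langle 2d\rangle$ ($U$ the hyperbolic plane). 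Hence every object of $\Phi(\scrS_Y(y))$ is orthogonal to the primitive isotropic vector $w_\Phi$.

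Next I would produce a suitable prime. The object $\cO_X$ is spherical, so $\RHom(\cO_X,\cO_X)$ is the formal dg algebra $k\oplus k[-2]$; were $\cO_X$ a split generator of $\Db(X)$, then $\Db(X)$ would be equivalent to $\Perf(k\oplus k[-2])$, forcing $K_{\mathrm{num}}(\Db(X))$ to have rank at most $1$, whereas $K_{\mathrm{num}}(\Db(X))$ has rank $2+\rho(X)=3$ (one may instead invoke the vertex-like criterion of \cite{el}, as in Proposition~\ref{prop:high genus}). So $\cO_X$ is not a split generator, and by Proposition~\ref{prop:non-empty} there is a maximal, hence prime, thick subcategory $\scrM\subset\Db(X)$ with $\cO_X\in\scrM$. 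Now $v(\cO_X)=(1,0,1)$ has $v(\cO_X)^2=-2$, so its orthogonal complement $v(\cO_X)^{\perp}\subset\widetilde{H}_{\mathrm{alg}}(X)$ has signature $(2,0)$ — this is exactly where the hypothesis $\rho(X)=1$ enters — hence is positive definite and contains no nonzero isotropic vector. By the previous paragraph, $\cO_X$ therefore lies in no $\Phi(\scrS_Y(y))$; as $\cO_X\in\scrM$, this gives $\scrM\ne\Phi(\scrS_Y(y))$ for all $Y,y,\Phi$, i.e.\ $\scrM\notin\Spec^{\FM}_{\triangle}\Db(X)$, while $\scrM\in\Spec_{\triangle}\Db(X)=\Spec^{\Ser}_{\triangle}\Db(X)$. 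This refutes the conjecture for $X$. The point to note is that maximal thick subcategories are opaque, so the argument deliberately never describes $\scrM$: it only uses that $\scrM$ contains a distinguished object belonging to no Fourier--Mukai prime, the rank computation treating closed and non-closed $y$ uniformly.

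The real work in the paper is not this refutation but the companion Theorem~\ref{thm:main 1}, identifying $\Spec^{\FM}_{\triangle}\Db(X)$ precisely as the displayed disjoint union, and there I expect the main obstacle to be a classification lemma: an equivalence between Picard-one K3 surfaces carrying a Balmer prime to a Balmer prime must, up to a standard autoequivalence, be one of finitely many reference equivalences onto the Fourier--Mukai partners of $X$. Proving this would combine Orlov's description of derived equivalences via Mukai--Hodge isometries with the classification of Fourier--Mukai partners and of derived autoequivalences of Picard-one K3 surfaces (Hosono--Lian--Oguiso--Yau, Ploog, Bridgeland--Maciocia). Granting it, together with Matsui's ringed-space structure on $\Spec_{\triangle}$ and Balmer reconstruction — so that overlapping images force the underlying surfaces to be isomorphic — the disjointness of the pieces and their indexing by $\bigsqcup_{Y\in\FM(X)} I_Y$ follow formally.
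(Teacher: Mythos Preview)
Your refutation is correct and follows the paper's strategy: exhibit a maximal thick subcategory $\scrM\ni\cO_X$ and show that no Fourier--Mukai prime can contain a spherical object. Only the packaging of the numerical obstruction differs. The paper argues on the $Y$ side: an object of $\scrS_Y(y)$ has rank zero (Remark~\ref{rem:torsion}), so its Mukai vector $(0,cH,m)$ has square $c^2H^2\ge0\ne-2$, hence is not spherical (Lemma~\ref{lem:rank zero}); since sphericality is preserved by equivalences, no Fourier--Mukai prime contains a spherical object (Lemma~\ref{lem:spherical and FM locus}). You argue on the $X$ side: every object of $\Phi(\scrS_Y(y))$ has Mukai vector orthogonal to the isotropic class $w_\Phi$, while $v(\cO_X)^\perp\subset\Halg(X,\bZ)$ is positive definite (a $(-2)$-vector in a signature $(2,1)$ lattice). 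Your version has the mild advantage of not needing to observe that $\rho(Y)=1$ for Fourier--Mukai partners $Y$, but the two are equivalent reformulations of the same lattice fact.

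Two corrections. The Elagin--Lunts vertex-like criterion you mention as an alternative is specific to curves and does not apply to K3 surfaces; the paper instead cites \cite{ll} for the non-generation of $\cO_X$ (Lemma~\ref{lem:spherical in maximal}). Your $K_0$ argument is fine, though the assertion that $K_0(\Perf(k\oplus k[-2]))$ has rank one deserves a word (e.g.\ finitely generated projectives over a graded-local dg algebra are free). More substantively, your closing speculation about Theorem~\ref{thm:main 1} misidentifies the mechanism. The paper does not use the Hosono--Lian--Oguiso--Yau or Ploog lattice classifications; it proves instead (Proposition~\ref{prop:birational=standard}) that a birational autoequivalence of a Picard-one K3 is standard, via stability conditions: a birational $\Phi$ sends some skyscraper to a skyscraper, so by Bayer--Bridgeland (Lemma~\ref{lem:geometric chamber}) $\Phi$ preserves the geometric chamber $U(X)$, whence $\Phi$ is standard by Lemma~\ref{lem:chamber and standard}. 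Combined with Ito's Proposition~\ref{prop:intersection} (intersection of two Balmer spectra is nonempty iff the composite equivalence is birational up to shift), the disjoint-union description follows immediately.
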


The following is a weaker version of Conjecture \ref{conj:strong}.

\begin{conj}[{\cite[Conjecture 6.17]{ito}}]\label{conj:weak}
Assume that $\pFM(\scrT)$ is non-empty.
Then we have
\[ \Spec^{\pFM}_{\triangle}\scrT=\Spec^{\Ser}_{\triangle}\scrT. \]
\end{conj}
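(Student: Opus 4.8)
The plan is to prove the two inclusions separately. The inclusion $\Spec^{\pFM}_{\triangle}\scrT \subseteq \Spec^{\Ser}_{\triangle}\scrT$ is already recorded in the Remark preceding the statement, so the entire content lies in the reverse inclusion $\Spec^{\Ser}_{\triangle}\scrT \subseteq \Spec^{\pFM}_{\triangle}\scrT$. Since both loci are intrinsic subspaces of $\Spec_{\triangle}\scrT$ that transport along equivalences, I may fix a proper Fourier-Mukai partner $X_0$ together with an equivalence $\scrT \simeq \Db(X_0)$ and assume $\scrT = \Db(X_0)$. The task then becomes concrete: given a Serre-invariant prime thick subcategory $\scrP$ of $\Db(X_0)$, I must produce a proper Fourier-Mukai partner $Y$ and an equivalence $\Phi \colon \Db(Y) \simto \Db(X_0)$ for which $\Phi^{-1}(\scrP)$ is a Balmer prime of $(\Db(Y),\otimes_Y)$, that is, $\Phi^{-1}(\scrP) = \scrS_Y(y)$ for some $y \in Y$.

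Next I would reduce ``becoming a Balmer prime after transport'' to an intrinsic ideal condition. By Proposition \ref{prop:Matusi vs Balmer}, a prime thick subcategory of $(\Db(Y),\otimes_Y)$ that happens to be a radical tensor-ideal is automatically a tensor-prime, hence of the form $\scrS_Y(y)$ by Theorem \ref{thm:scheme vs Balmer}. Thus it suffices to find an equivalence $\Phi$ under which $\scrP$ becomes a radical $\otimes_Y$-ideal. When $\omega_{X_0}$ or $\omega_{X_0}^{-1}$ is ample this costs nothing---Serre-invariance already forces the ideal property by Remark \ref{rem:CY and Serre}(2), and one may take $Y = X_0$---so the real difficulty is concentrated in the Calabi-Yau regime, where the Serre functor is a mere shift and Serre-invariance imposes no constraint whatsoever on $\scrP$.

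In that regime the strategy would be to exploit the large supply of Fourier-Mukai partners arising as moduli spaces of stable objects, together with the action of $\Aut\scrT$, in order to rotate an arbitrary Serre-invariant prime into tensor-ideal position on some partner $Y$. Concretely, I would attempt to attach to $\scrP$ a Bridgeland stability condition and a stable object whose fine moduli space $Y$ is a proper Fourier-Mukai partner, and then identify $\scrP$ with the support prime $\scrS_Y(y)$ at the point parametrizing that object. This mirrors the elliptic-curve picture of Example \ref{ex:known case}(2), where the whole Matsui spectrum is assembled from the Balmer spectra of the moduli spaces $M(r,d)$, each of which is itself a Fourier-Mukai partner isomorphic to the original curve.

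The hard part will be proving that \emph{every} Serre-invariant prime is realized in this way, with no exotic primes left outside the geometry of all Fourier-Mukai partners. By the torsion criterion of Remark \ref{rem:torsion}, every object of a transported Balmer prime has rank zero, so the target primes are torsion-like; the crux is therefore to show that any Serre-invariant prime can be made torsion-like---equivalently, an ideal---by a suitable choice of $(Y,\Phi)$. This forces one to control maximal thick subcategories that are a priori torsion-free, such as those generated by spherical or structure-sheaf-type objects in the spirit of the argument of Proposition \ref{prop:high genus}, and to show that each is nonetheless a Balmer point after an autoequivalence. I expect this to be the decisive and most delicate step: it demands a full classification of Serre-invariant primes on a Calabi-Yau variety together with a precise description of how $\Aut\scrT$ permutes the Balmer spectra $\Spec_{\otimes_Y}\Db(Y)$ of the various moduli-space partners $Y$, and it is precisely this step whose feasibility the Picard-number-one analysis of the present paper is designed to probe.
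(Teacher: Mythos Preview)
The statement you are attempting to prove is a \emph{conjecture}, and the paper does not prove it; on the contrary, the paper \emph{disproves} it. Theorem~\ref{thm:main B} shows that when $\scrT\simeq\Db(X)$ for a complex K3 surface $X$ of Picard number one, there exists a maximal thick subcategory $\scrM\in\Spec_{\triangle}\scrT=\Spec_{\triangle}^{\Ser}\scrT$ with $\scrM\notin\Spec_{\triangle}^{\FM}\scrT=\Spec_{\triangle}^{\pFM}\scrT$. So your plan cannot succeed, and the failure occurs exactly at what you call the ``decisive and most delicate step.''

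Concretely, the torsion-free maximal you flag for control is the obstruction, not a case to be handled. The structure sheaf $\cO_X$ is spherical and, by \cite{ll}, not a split generator, so by Proposition~\ref{prop:non-empty} it sits inside some maximal thick subcategory $\scrM$. Being spherical is invariant under any equivalence $\Phi$, so $\Phi^{-1}(\scrM)$ also contains a spherical object. But for any Fourier--Mukai partner $Y$ (necessarily a K3 of Picard number one) and any $y\in Y$, every object of $\scrS_Y(y)$ has rank zero (Remark~\ref{rem:torsion}), hence its Mukai self-pairing is $\geq 0$ (Lemma~\ref{lem:rank zero}), so $\scrS_Y(y)$ contains no spherical object. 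Therefore $\Phi^{-1}(\scrM)$ is never of the form $\scrS_Y(y)$, and $\scrM$ lies outside $\Spec_{\triangle}^{\pFM}\scrT$. In short, your hoped-for autoequivalence ``rotating'' $\scrM$ into tensor-ideal position on some partner does not exist.
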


For some special cases, Conjecture \ref{conj:strong} is known to be true.

\begin{rem}[{\cite[Theorem 1.6]{ito}}]
Assume that $\scrT \simeq \Db(X)$ holds for some smooth projective variety $X$.
If $X$ is a curve or $X$ has the ample (anti-) canonical line bundle, 
Conjecture \ref{conj:strong} is true.
\end{rem}

We will give a counterexample of Conjecture \ref{conj:strong} and Conjecture \ref{conj:weak} using K3 surfaces later.
For non-commutative K3 surfaces, we give the following remark.

\begin{rem}
Let $X$ be a complex smooth cubic fourfold.
By \cite{kuz10}, we have the semi-orthogonal decomposition
\[ \Db(X)=\langle \scrA_X, \cO_X, \cO_X(1), \cO_X(2) \rangle  \]
and the admissible subcategory $\scrA_X$ is a $2$-dimensional Calabi-Yau category. 
Therefore, we have
\[ \Spec^{\Ser}_{\triangle}\scrA_X=\Spec_{\triangle}\scrA_X \neq \emptyset \]
by Proposition \ref{cor:empty}.

Since the Hochschild homology $HH_*(\scrA_X)$ of $\scrA_X$ is a $24$-dimensional vector space by \cite[Corollary 7.5]{kuz}, $\scrA_X$ is not equivalent to the derived category of an abelian surface.  

Assume that $X$ is very general, that is $\rk H^{2,2}(X,\bZ)=1$. Then it is well known that $\scrA_X$ is not equivalent to the derived category of a K3 surface. For example, see \cite[Theorem 1.1]{at14}. 
In particular, 
\[ \FM(\scrA_X)=\pFM(\scrA_X)=\emptyset\]
holds, and then we have
\[ \Spec^{\FM}_{\triangle}(\scrA_X)=\Spec^{\pFM}_{\triangle}(\scrA_X)=\emptyset. \]
Therefore, we obtain
\[ \Spec^{\FM}_{\triangle}(\scrA_X)=\Spec^{\pFM}_{\triangle}(\scrA_X) \subsetneq \Spec^{\Ser}_{\triangle}\scrA_X. \]
\end{rem}

\section{Birational Fourier-Mukai transforms}
\subsection{Fourier-Mukai transforms}~

In this subsection, we recall the notion of birational Fourier-Mukai transforms following \cite{voet}.

Let $X$ and $Y$ be smooth proper varieties with the projection morphisms $p \colon X \times Y \to Y$ and $q \colon X \times Y \to X$. For an object $\cE \in \Db(X \times Y)$, the exact functor
\[ \Phi_\cE\defeq p_*(q^*(-) \otimes_{X\times Y} \cE) \colon \Db(X) \to \Db(Y) \]
is called the {\it Fourier-Mukai functor} with the Fourier-Mukai kernel $\cE$. If $\Phi_\cE$ is an equivalence, $\Phi_\cE$ is called a {\it Fourier-Mukai transform}. The following is the definition of birational Fourier-Mukai transformations.

\begin{dfn}[{\cite[Definition 4.0.1]{voet}}]\label{def:birational FM transform}
Let $\Phi \colon \Db(X) \simto \Db(Y)$ be an equivalence. 
Take $\cE \in \Db(X \times Y)$ satisfying $\Phi \simeq \Phi_\cE$. 
Then $\Phi$ is {\it birational} if there is a non-empty open subset $U$ of $X$ and an open immersion $\varphi \colon U \to Y$ such that $\cE|_{U \times Y}$ is isomorphic to the structure sheaf of the graph of $\varphi$.
\end{dfn}

\begin{rem}\label{rem:point to point}
In Definition \ref{def:birational FM transform}, if $\Phi_\cE \colon \Db(X) \simto \Db(Y)$ is a birational Fourier-Mukai transform, we have $\Phi_\cE(\cO_x) \simeq \cO_{(\varphi(x))}$ holds for any closed point $x \in U$.
\end{rem}

An autoequivalence $\Phi \colon \Db(X) \simto \Db(X)$ is {\it standard} if $\Phi$ is isomorphic to a composition of shifts, automorphisms of $X$ and tensoring line bundles on $X$. 
Let $\Aut^{\mathrm{st}}\Db(X)$ be the subgroup of standard autoequivalences of $\Db(X)$.

\subsection{Intersection of Balmer spectra}~

In this subsection, we recall the result in \cite{ito} about the intersection of Balmer spectra and birational Fourier-Mukai transforms.

Let $\scrT$ be a triangulated category, and  
assume that $\FM(\scrT)$ is non-empty.
For birational Fourier-Mukai transforms, we recall the following proposition.

\begin{prop}[{\cite[Lemma 4.11]{ito}}]\label{prop:intersection}
Let $X$ and $Y$ be smooth projective varieties with equivalences
\[ \Phi_1 \colon \Db(X) \simto \scrT,~\Phi_2 \colon \Db(Y) \simto \scrT.  \]
Then 
\[ \Phi_1\left(\Spec_{\otimes}\Db(X)\right) \cap \Phi_2\left(\Spec_{\otimes}\Db(Y)\right) \neq \emptyset \]
holds if and only if $\Phi^{-1}_2 \circ \Phi_1 \colon \Db(X) \simto \Db(Y)$ is birational up to shifts.
\end{prop}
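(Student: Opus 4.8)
The plan is to transport everything across $\Phi:=\Phi_2^{-1}\circ\Phi_1\colon\Db(X)\simto\Db(Y)$. Since $\Phi_1,\Phi_2$ are equivalences they induce homeomorphisms of Matsui spectra, so the stated intersection is non-empty if and only if $\Phi(\Spec_{\otimes}\Db(X))\cap\Spec_{\otimes}\Db(Y)\neq\emptyset$ inside $\Spec_{\triangle}\Db(Y)$; moreover the shift $[n]$ fixes each $\scrS_Y(y)$ (as $\Supp(E[n])=\Supp(E)$, cf.\ Remark~\ref{rem:torsion}), so passing to a shift of $\Phi$ is harmless. Writing $\Spec_{\otimes}\Db(X)=\{\scrS_X(x)\mid x\in X\}$ via Theorem~\ref{thm:scheme vs Balmer}, the assertion becomes: $\Phi(\scrS_X(x))=\scrS_Y(y)$ for some $x\in X$, $y\in Y$ if and only if $\Phi$ is birational up to shift. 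The key elementary fact I would use is that for a \emph{closed} point $x$ one has the intrinsic description $\scrS_X(x)=\{E\in\Db(X)\mid\RHom_X(E,\cO_x)=0\}$, by Nakayama's lemma over the regular local ring $\cO_{X,x}$.

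For $(\Leftarrow)$: after a shift we may assume $\Phi$ is birational, so by Remark~\ref{rem:point to point} there are a non-empty open $U\subseteq X$ and an open immersion $\varphi\colon U\hookrightarrow Y$ with $\Phi(\cO_x)\cong\cO_{\varphi(x)}$ for every closed $x\in U$. Transporting the above description through the equivalence $\Phi$ gives, for such $x$,
\[ \Phi(\scrS_X(x))=\{F\in\Db(Y)\mid\RHom_Y(F,\Phi(\cO_x))=0\}=\{F\mid\RHom_Y(F,\cO_{\varphi(x)})=0\}=\scrS_Y(\varphi(x))\in\Spec_{\otimes}\Db(Y), \]
so the intersection contains $\scrS_Y(\varphi(x))$ for any closed $x\in U$, hence is non-empty.

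For $(\Rightarrow)$, assume $\Phi(\scrS_X(x))=\scrS_Y(y)$, equivalently $x\in\Supp(E)\iff y\in\Supp(\Phi E)$ for all $E$. First, $\Phi$ descends to an equivalence of Verdier quotients $\Db(X)/\scrS_X(x)\simto\Db(Y)/\scrS_Y(y)$; identifying these with the bounded derived categories of finitely generated $\cO_{X,x}$- and $\cO_{Y,y}$-modules (localisation of $\Db$ at the complexes vanishing at the point) and recovering the local ring as the endomorphism ring of the identity functor, one gets $\cO_{X,x}\cong\cO_{Y,y}$; comparing Krull dimensions and using $\dim X=\dim Y$ yields $\codim\overline{\{x\}}=\codim\overline{\{y\}}$, so $x$ is closed exactly when $y$ is. In the closed case I would then pin down $\Phi(\cO_x)$: as $\scrS_X(x)$ is prime, $\overline{\scrS_X(x)}$ exists, and $\cO_x$ is, up to shift, the unique indecomposable $P$ with $x\in\Supp(P)$ and $\langle\scrS_X(x),P\rangle=\overline{\scrS_X(x)}$; this description is preserved by $\Phi$, and since such an object downstairs has a field as endomorphism ring and is completely orthogonal to $\scrS_Y(y)$, it follows that $\Phi(\cO_x)\cong\cO_y[n]$ for some $n$. (The non-closed case is analogous, working with $\cO_{\overline{\{x\}}}$ in place of $\cO_x$, or reduces to the closed case; note that $\Phi=\id$ already shows non-closed $x$ genuinely occur.)

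Finally, replacing $\Phi$ by $\Phi[-n]$, the Fourier--Mukai kernel $\cE$ with $\Phi\simeq\Phi_\cE$ satisfies $\cE|_{\{x\}\times Y}\cong\cO_y$. The locus of points of $X$ over which $\cE$ restricts to a length-one skyscraper sheaf --- equivalently, over which $\cE$ is locally a sheaf, flat over $X$ of relative length one --- is open by semicontinuity, hence a non-empty open $U\ni x$; over $U$ the support of $\cE$ is a degree-one finite cover of $U$, i.e.\ the graph of a morphism $\varphi\colon U\to Y$, and (after further shrinking $U$ so the line bundle on the graph trivialises) $\cE|_{U\times Y}\cong\cO_{\Gamma_\varphi}$. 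The objects $\Phi(\cO_{x'})=\cO_{\varphi(x')}$, $x'\in U$, are pairwise orthogonal because $\Phi$ is an equivalence, so $\varphi$ is injective on points; being a quasi-finite dominant morphism of smooth varieties of equal dimension it is birational onto its image, hence an open immersion by Zariski's main theorem. Thus $\Phi$ is birational. The step I expect to be the main obstacle is precisely this last one --- converting the single, ``pointwise'' categorical equality $\Phi(\scrS_X(x))=\scrS_Y(y)$ into the global statement that the Fourier--Mukai kernel restricts, on a dense open set, to the structure sheaf of the graph of an open immersion; the spreading-out and the recognition of the correspondence as an open immersion rather than merely a morphism carry the genuine geometric content, and this is what \cite[Lemma 4.11]{ito} establishes.
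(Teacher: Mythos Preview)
The paper does not give its own proof of this proposition: it is quoted verbatim from \cite[Lemma~4.11]{ito} and used as a black box in the proof of Theorem~\ref{thm:main A}. There is thus nothing in the paper to compare your argument against.

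On the merits of your sketch: the $(\Leftarrow)$ direction is correct, and the reduction at the start is clean. In the $(\Rightarrow)$ direction, however, your intrinsic characterisation of $\cO_x$ is not valid as stated. You claim $\cO_x$ is, up to shift, the unique indecomposable $P$ with $x\in\Supp P$ and $\langle\scrS_X(x),P\rangle=\overline{\scrS_X(x)}$; but already on $\bP^1$ the structure sheaf $\cO_X$ satisfies both conditions (one checks that $\scrS_X(x)$ together with $\cO_X$ generates $\Db(\bP^1)$, and $\End(\cO_X)=k$ as well). One needs at minimum the extra condition $\Supp P\subseteq\{x\}$, or equivalently orthogonality to $\scrS_X(x)$, which you only invoke afterwards. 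Even then, the assertion that an indecomposable object set-theoretically supported at a single closed point with endomorphism ring $k$ must be a shift of $\cO_x$ is not automatic (think of cones on nonzero elements of $\Ext^2(\cO_x,\cO_x)$ on a surface) and requires a genuine argument, e.g.\ via the length filtration on $\Db_{\{x\}}(X)$ or reduction to the complete local ring. Your final paragraph correctly identifies the spreading-out and open-immersion step as the substantive geometric content; that, together with a rigorous pointwise identification $\Phi(\cO_x)\cong\cO_y[n]$, is precisely what Ito's lemma supplies.
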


For standard autoequivalences, we have the following proposition.

\begin{prop}\label{prop:standard}
Let $X$ be a smooth projective variety with equivalences
\[ \Phi_1 \colon \Db(X) \simto \scrT,~\Phi_2 \colon \Db(X) \simto \scrT.  \]
If $\Phi^{-1}_2 \circ \Phi_1$ is a standard autoequivalence of $\Db(X)$, we have
\[ \Phi_1\left(\Spec_{\otimes_X}\Db(X)\right)=\Phi_2\left(\Spec_{\otimes_X}\Db(X)\right).  \]
\end{prop}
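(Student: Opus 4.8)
The plan is to reduce Proposition~\ref{prop:standard} to the statement that a standard autoequivalence of $\Db(X)$ preserves the Balmer spectrum of $(\Db(X),\otimes_X)$ as a subset of $\Th(\Db(X))$, and then to verify this on each type of generating standard autoequivalence. Concretely, set $\Psi\defeq\Phi_2^{-1}\circ\Phi_1\in\Aut^{\mathrm{st}}\Db(X)$. Since $\Phi_1=\Phi_2\circ\Psi$ and each $\Phi_i$ sends a prime ideal $\scrP$ of $(\Db(X),\otimes_X)$ to the prime thick subcategory $\Phi_i(\scrP)$ of $\scrT$, we get $\Phi_1\bigl(\Spec_{\otimes_X}\Db(X)\bigr)=\Phi_2\bigl(\Psi(\Spec_{\otimes_X}\Db(X))\bigr)$, so it is enough to show $\Psi(\Spec_{\otimes_X}\Db(X))=\Spec_{\otimes_X}\Db(X)$ inside $\Th(\Db(X))$. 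The autoequivalences of $\Db(X)$ enjoying this property clearly form a subgroup of $\Aut\Db(X)$, hence it suffices to treat shifts, pushforwards along automorphisms of $X$, and tensoring by line bundles separately.

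For the three cases I would argue as follows, using that $\Db(X)=\Perf X$ and the homeomorphism $\scrS_X\colon X\simto\Spec_{\otimes_X}\Db(X)$, $x\mapsto\scrS_X(x)$, of Theorem~\ref{thm:scheme vs Balmer}. First, the shift $[1]$ acts as the identity on $\Th(\Db(X))$, because every thick subcategory is closed under shifts; in particular it fixes the Balmer spectrum pointwise. Second, for a line bundle $L$ one has $\Supp(E\otimes_X L)=\Supp(E)$ for every object $E$, so $E\in\scrS_X(x)$ if and only if $E\otimes_X L\in\scrS_X(x)$; thus $-\otimes_X L$ sends $\scrS_X(x)$ to $\scrS_X(x)$ for every $x$. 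Third, for an automorphism $f\colon X\to X$, the pushforward $f_*$ is a symmetric monoidal equivalence (its quasi-inverse is the pullback $f^*$, which is monoidal), so by Balmer's functoriality of $\Spec_{\otimes}$ it restricts to a self-homeomorphism of $\Spec_{\otimes_X}\Db(X)$; equivalently, from $\Supp(f_*E)=f(\Supp E)$ one reads off $f_*(\scrS_X(x))=\scrS_X(f(x))$, so $f_*$ merely permutes the points of the Balmer spectrum. In all three cases $\Psi$ maps $\Spec_{\otimes_X}\Db(X)$ onto itself, which finishes the argument.

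I do not expect a genuine obstacle here; the computation is formal. The one point that needs care is that $-\otimes_X L$ is \emph{not} a tensor functor---it does not preserve the unit $\cO_X$---so one cannot cite functoriality of $\Spec_{\otimes}$ for it, and must instead use invariance of the support as above (or, equivalently, a direct check that $-\otimes_X L$ carries ideals to ideals and prime ideals to prime ideals, which is a short manipulation with $L^{-1}$). Beyond that, the only thing to keep track of is the bookkeeping: that the class of autoequivalences fixing the Balmer locus is closed under composition and inverses, so that the three elementary cases assemble to the claim for an arbitrary standard autoequivalence $\Psi$.
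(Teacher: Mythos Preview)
Your proposal is correct and follows essentially the same approach as the paper: set $\Psi=\Phi_2^{-1}\circ\Phi_1$, reduce to showing $\Psi(\Spec_{\otimes_X}\Db(X))=\Spec_{\otimes_X}\Db(X)$, and verify this on the three generating types of standard autoequivalences (shifts, automorphisms, line bundle twists). The paper's proof is terser---it simply asserts that $f^*$ induces an automorphism of $\Spec_{\otimes}\Db(X)$ and that $(-)\otimes_X\cL[n]$ acts trivially---while you supply the supporting details via supports and Balmer functoriality, and you correctly flag the subtlety that $-\otimes_X L$ is not monoidal.
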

\begin{proof}
For an automorphism $f$ of $X$, $f^* \colon \Db(X) \simto \Db(X)$ induces the automorphism 
\[ f^* \colon \Spec_{\otimes} \Db(X) \simto \Spec_{\otimes} \Db(X). \]
For a line bundle $\cL$ on $X$ and an integer $n$, 
$(-) \otimes_X \cL[n] \colon \Db(X) \simto \Db(X)$ acts on $\Spec_{\otimes} \Db(X)$ trivially. Assume that  $\Phi\defeq\Phi^{-1}_2 \circ \Phi_1$ is a standard autoequivalence of $\Db(X)$. Then we have $\Phi_2 \circ \Phi=\Phi_1$. 
Since 
\[\Phi\left(\Spec_{\otimes}\Db(X)\right)=\Spec_{\otimes}\Db(X)\]
holds, we obtain the desired claim.
\end{proof}

\section{Bridgeland Stability condition}
\subsection{Stability conditions on K3 surfaces}~

In this section, we quickly recall the basics of stability conditions on derived categories of K3 surfaces following \cite{bri07} and \cite{bri08}.

Let $X$ be a complex K3 surface.
The {\it algebraic Mukai lattice} $\Halg(X,\bZ)$ of $X$ is defined as
\[ \Halg(X,\bZ)\defeq H^0(X,\bZ) \oplus \NS(X) \oplus H^4(X,\bZ). \]
The structure of a lattice is defined by the {\it Mukai pairing}
\[ \langle (r_1,c_1, m_1), (r_2,c_2,m_2) \rangle\defeq c_1c_2-r_1m_2-r_2m_1  \]
for $(r_1, c_1, m_1), (r_2, c_2, m_2) \in \Halg(X,\bZ)$. Then we have the surjective homomorphism 
\[ v_X \colon K_0(X) \twoheadrightarrow \Halg(X,\bZ),~[E] \mapsto \ch(E) \cdot \sqrt{\mathrm{td}_X}. \]
For an object $E \in D^b(X)$, the cohomology class $v_X(E)$ is called the {\it Mukai vector} of $E$. 
By Riemann--Roch formula, we have
\[ \langle v_X(E), v_X(F) \rangle=-\chi(E,F) \]
for any objects $E,F \in \Db(X)$, where $\chi(E,F)$ is defined by
\[ \chi(E,F) = \sum_{n \in \bZ} (-1)^n \dim \Ext^n(E,F). \]

The following is the definition of stability conditions on $\Db(X)$. Fix a norm $||-||$ on the real vector space $\Halg(X,\bZ) \otimes \bR$.

\begin{dfn}[{\cite[Definition 5.1]{bri07}}]
 A {\it stability condition} $\sigma=(Z, \cP)$ on $\Db(X)$ is a pair of a homomorphism $Z \colon \Halg(X,\bZ) \to \bC$ and a collection $\cP=\{\cP(\phi) \}_{ \phi \in \bR}$ of full additive subcategories $\cP(\phi)$ of $\scrT$ satisfying the following conditions.

\begin{itemize}
    \item[$(1)$] For $\phi \in \bR$ and $0 \neq E \in \cP(\phi)$, we have $Z(v_X(E))\in \bR_{>0}\exp({\sqrt{-1}\pi \phi})$. 
    \item[$(2)$] For all $\phi\in\bR$, we have $\cP(\phi+1)=\cP(\phi)[1]$.
    \item[$(3)$] For $\phi_1>\phi_2$ and $E_i\in\cP(\phi_i)$ $(i=1,2)$, we have $\Hom(E_1,E_2)= 0$.  
\item[$(4)$] For any non-zero object $E \in \cT$, there exist real numbers 
\[\phi_1 > \cdots >\phi_n\]
and  a sequence 
\[
 0=E_0 \xrightarrow{f_1}E_1 \xrightarrow{f_2} E_2 \to \cdots \to E_{n-1} \xrightarrow{f_n} E_n=E 
 \]
of morphisms in $\cT$ such that the mapping cone $\Cone(f_i)$ is contained in $\cP(\phi_k)$ for $1 \leq i \leq n$.
\item[$(5)$] There is a positive real number $C$ such that 
for any $\phi \in \bR$ and a non-zero object $E \in \cP(\phi)$, we have 
\[ ||v_X(E)|| \leq C \cdot |Z(v_X(E))|. \]
\end{itemize}
Denote the set of all stability conditions on $\Db(X)$ by $\Stab(\Db(X))$.
\end{dfn}

By \cite[Theorem 1.2, Proposition 8.1]{bri07}, $\Stab(\Db(X))$ has the structure of a topological space.
We denote by $\Stab^\dagger(X)$ the distinguished connected component of $\Stab(\Db(X))$ in {\cite[Theorem 1.1]{bri08}}.
The notion of $\sigma$-(semi)stable objects is important for our purpose.

\begin{dfn}
Let $\sigma=(Z,\cP)$ be a stability condition on $\Db(X)$. For $\phi \in \bR$, a non-zero object $E \in \cP(\phi)$ is called a $\sigma$-{\it semistable object} of phase $\phi$. A simple object $E$ of $\cP(\phi)$ is called a $\sigma$-{\it stable object} of phase $\phi$. 
\end{dfn}

For a stability condition $\sigma=(Z,\cP) \in \Stab(\Db(X))$,
note that $\cP(\phi)$ is an abelian category for all $\phi \in \cP(\phi)$ by \cite[Lemma 5.2]{bri07}.

A stability condition $\sigma$ on $\Db(X)$ is {\it geometric} if there is a real number $\phi$ such that for any closed point $x \in X$, $\cO_x$ is $\sigma$-stable of phase $\phi$. Denote by $U(X)$  the set of geometric stability conditions on $\Db(X)$. By {\cite[Section 10, Section 11]{bri08}}, $U(X)$ is a non-empty open subset of $\Stab^*(X)$, and it is called the {\it geometric chamber} of $X$.

\subsection{Key results by Bayer and Bridgeland}~

In this subsection, we recall some results in \cite{bb}.
Let $X$ be a complex K3 surface.
For an autoequivalence $\Phi \colon \Db(X) \simto \Db(X)$, we have the {\it cohomological Fourier-Mukai transform}
\[\Phi^H \colon \Halg(X,\bZ) \simto \Halg(X,\bZ)\] of $\Phi$ with the following commutative diagram.
\[\xymatrix{K_0(X) \ar[d]_{v_X} \ar[r]^{\Phi^K} & K_0(X) \ar[d]^{x_X} \\
\Halg(X,\bZ) \ar[r]^{\Phi^H} &  \Halg(X,\bZ) } \]
Here, 
\[ \Phi^K \colon K_0(X) \simto K_0(X),~[E] \mapsto [\Phi(E)]\]
is the automorphism of $K_0(X)$. 
For the details, see \cite[Section 5.2]{huybre} and \cite[Lemma 10.6]{huybre}. 
 
\begin{dfn}[{\cite[Lemma 8.2]{bri07}}]
For an autoequivalence $\Phi \in \Aut \Db(X)$ and a stability condition $\sigma=(Z,\cP) \in \Stab(\Db(X))$, we define the stability condition 
\[ \Phi\sigma\defeq(Z \circ (\Phi^H)^{-1}, \Phi(\cP)) \in \Stab(\Db(X)),\]
where we put $\Phi(\cP)\defeq\{\Phi(\cP(\phi))\}_{\phi \in \bR}$.
\end{dfn}

By {\cite[Lemma 8.2]{bri07}}, the group $\Aut\Db(X)$ acts on $\Stab(\Db(X))$. It is not known that $\Aut\Db(X)$ preserves the connected component $\Stab^\dagger(X)$ in general. However, the following is known.

\begin{prop}[{\cite[Theorem 1.3]{bb}}]
Assume that the Picard number of $X$ is one. 
The group $\Aut\Db(X)$ preserves the connected component $\Stab^\dagger(X)$.
\end{prop}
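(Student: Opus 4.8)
The plan is to follow the approach of Bayer--Bridgeland, organised around Bridgeland's covering map. Let $\cZ\colon\Stab(\Db(X))\to\Halg(X,\bC)$ be the local homeomorphism sending a stability condition $\sigma=(Z,\cP)$ to the vector $\varpi$ with $Z(\cdot)=\langle\varpi,\cdot\rangle$, and let $\mathcal{P}_0^+(X)\subset\Halg(X,\bC)$ be Bridgeland's period domain: the distinguished connected component of the set of vectors whose real and imaginary parts span an oriented positive-definite two-plane, with all hyperplanes $\delta^\perp$ removed, $\delta$ ranging over the spherical classes ($\langle\delta,\delta\rangle=-2$). By Bridgeland's theorem $\cZ$ restricts to a covering map $\Stab^\dagger(X)\to\mathcal{P}_0^+(X)$. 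Since $\Aut\Db(X)$ acts on $\Stab(\Db(X))$ by homeomorphisms, it permutes the connected components, so the assertion is equivalent to saying that for every $\Phi\in\Aut\Db(X)$ the component $\Phi(\Stab^\dagger(X))$ meets $\Stab^\dagger(X)$; and since $\cZ(\Phi\sigma)=\Phi^H(\cZ(\sigma))$, this is a statement about the isometry $\Phi^H$ and the monodromy of $\cZ$.

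The first step is to show that $\Phi^H$ preserves $\mathcal{P}_0^+(X)$. As an isometry of $\Halg(X,\bZ)$ it permutes the spherical classes, hence the removed hyperplanes, so it suffices to see that it fixes the distinguished component, i.e. acts by an orientation-preserving isometry of the Grassmannian of positive-definite two-planes. Here the Picard-number-one hypothesis enters mildly: $\Phi^H$ extends to a Hodge isometry of the full Mukai lattice $\widetilde{H}(X,\bZ)$, which preserves the orientation of the positive four-plane by the Huybrechts--Macr\`{\i}--Stellari orientation theorem; any Hodge isometry automatically preserves the orientation of the period two-plane in the transcendental summand (it scales $H^{2,0}$, and multiplication by a nonzero complex number is orientation-preserving on the underlying real plane), so the orientation on the orthogonal summand $\Halg(X,\bZ)$ is preserved as well. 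Thus $\Phi$ carries $\Stab^\dagger(X)$ to a connected component of $\cZ^{-1}(\mathcal{P}_0^+(X))$.

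It then remains to prove that $\cZ^{-1}(\mathcal{P}_0^+(X))$ is connected, equivalently that the stabiliser $\Aut^\dagger\Db(X)$ of $\Stab^\dagger(X)$ is all of $\Aut\Db(X)$. On one hand $\Aut^\dagger\Db(X)$ is already large: shifts, line-bundle twists and automorphisms of $X$ fix skyscraper sheaves and hence preserve the geometric chamber $U(X)\subset\Stab^\dagger(X)$; and for each $k$ the line bundle $\cO_X(k)$ is a spherical object that is stable with respect to some $\sigma\in\Stab^\dagger(X)$, so Bridgeland's local description of spherical twists shows that $\mathrm{ST}_{\cO_X(k)}$ maps a neighbourhood of a point on the associated wall back into $\Stab^\dagger(X)$, whence $\mathrm{ST}_{\cO_X(k)}\in\Aut^\dagger\Db(X)$. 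On the other hand, because $\Pic(X)=\bZ$ the lattice $\Halg(X,\bZ)$ has rank three and signature $(2,1)$, so its isometry group is small enough to be analysed directly: one checks that the reflections in the vectors $v_X(\cO_X(k))$, the unipotent isometries coming from line-bundle twists, and $-\id$ (from the shift) already generate the image of $\Aut\Db(X)$ in $\mathrm{O}(\Halg(X,\bZ))$, while the autoequivalences acting trivially on $\Halg(X,\bZ)$ are likewise built from these shifts and twists and so also lie in $\Aut^\dagger\Db(X)$. Combining the two directions forces $\Aut^\dagger\Db(X)=\Aut\Db(X)$.

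The main obstacle is this last step: pinning down, for a Picard-rank-one K3 surface, both the image of $\Aut\Db(X)$ in $\mathrm{O}(\Halg(X,\bZ))$ and the kernel of that homomorphism, and verifying that the chosen generators genuinely preserve $\Stab^\dagger(X)$. Identifying the kernel is the delicate part, since it amounts to computing the deck-transformation group of the covering $\cZ$, which rests on the explicit topology of the hyperplane-arrangement complement $\mathcal{P}_0^+(X)$ --- tractable precisely because the rank of $\Halg(X,\bZ)$ is three. By comparison the orientation input of the first step is comparatively soft.
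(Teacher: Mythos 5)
This proposition is not proved in the paper at all: it is imported verbatim from Bayer--Bridgeland \cite{bb}, and your outline does reproduce the coarse architecture of their argument (the covering map $\cZ\colon\Stab^\dagger(X)\to\mathcal{P}_0^+(X)$, the Huybrechts--Macr\`i--Stellari orientation theorem to see that $\Phi^H$ preserves the period domain, and the reduction to showing that the stabiliser of $\Stab^\dagger(X)$ in $\Aut\Db(X)$ is everything). However, the decisive step --- the only one that actually uses the Picard-number-one hypothesis, since the orientation input holds for every projective K3 surface --- is not carried out, and what you write in its place has two genuine problems. First, the claimed equivalence between ``$\cZ^{-1}(\mathcal{P}_0^+(X))$ is connected'' and ``$\Aut^\dagger\Db(X)=\Aut\Db(X)$'' is false: connectedness of the full preimage would say that $\Stab^\dagger(X)$ is the \emph{only} component of $\Stab(\Db(X))$ lying over $\mathcal{P}_0^+(X)$, a strong form of the open connectedness conjecture that Bayer--Bridgeland neither prove nor need. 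Second, and more seriously, your treatment of the kernel of $\Aut\Db(X)\to \mathrm{O}(\Halg(X,\bZ))$ is circular: the deck-transformation group of $\cZ|_{\Stab^\dagger(X)}$ only describes cohomologically trivial autoequivalences that are \emph{already known} to preserve $\Stab^\dagger(X)$, so no computation of $\pi_1(\mathcal{P}_0^+(X))$ can rule out a cohomologically trivial $\Phi$ with $\Phi(\Stab^\dagger(X))\cap\Stab^\dagger(X)=\emptyset$.

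What is actually needed, and what \cite{bb} supply, is a direct argument that for an arbitrary $\Phi$ the objects $\Phi(\cO_x)$ become stable for some stability condition in $\Stab^\dagger(X)$. Their proof runs through the two-dimensional moduli spaces $M_\sigma(v)$ of stable objects with $v=\Phi^H(0,0,1)$ primitive isotropic and the Fourier--Mukai equivalences they induce (which also realise isometries that your proposed generators --- reflections in the classes $v_X(\cO_X(k))$, the unipotents from line-bundle twists, and $-\id$ --- are not shown to produce; that generation claim is asserted, never checked, and is itself a nontrivial part of the work), together with the classification of totally semistable walls in Picard rank one and the recognition criterion for the geometric chamber quoted in this paper as Lemma \ref{lem:geometric chamber}. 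None of this is replaceable by the rank-three lattice computation you gesture at, so as it stands the proposal identifies the correct framework but leaves the theorem's actual content unproved.
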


We will use the following result about the geometric chamber $U(X)$.

\begin{lem}[{\cite[Lemma 6.7]{bb}}]\label{lem:geometric chamber} 
Assume that the Picard number of $X$ is one.
Take a stability condition $\sigma \in \Stab^\dagger(X)$. 
If there is a closed point $x \in X$ such that $\cO_x$ is $\sigma$-stable, we have $\sigma \in U(X)$.
\end{lem}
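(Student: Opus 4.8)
The plan is to show that \emph{every} skyscraper sheaf $\cO_y$, $y\in X$ a closed point, is $\sigma$-stable. Since all the $\cO_y$ share the Mukai vector $\delta\defeq v_X(\cO_x)=(0,0,1)$, they share the nonzero central charge $Z_\sigma(\delta)$, so any two of them that are $\sigma$-semistable have the same phase; hence once all $\cO_y$ are $\sigma$-stable the stability condition is geometric, i.e.\ $\sigma\in U(X)$. At the outset I record the numerical consequences of $\rho(X)=1$: writing $\NS(X)=\bZ H$ with $H^2=2n>0$, one has $\langle(r,dH,s),(r,dH,s)\rangle=2nd^2-2rs$ and $\langle(r,dH,s),\delta\rangle=-r$, so $\delta$ is primitive with $\delta^2=0$, and every class in $\delta^{\perp}$ is of the form $(0,dH,s)$ with square $\geq 0$. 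In particular $\delta^{\perp}$ contains no $(-2)$-class; equivalently, $X$ has no $(-2)$-curve and no spherical object of rank $0$.

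First I apply openness of stability in flat families to the structure sheaf $\cO_\Delta\in\coh(X\times X)$ of the diagonal, viewed as a flat family of the $\cO_y$ over $X$: the locus $V\defeq\{y\in X\mid \cO_y\text{ is }\sigma\text{-stable}\}$ is open and contains $x$, hence is dense. Next, since $\delta$ is primitive and isotropic, any wall for $\delta$ is totally semistable and so carries no $\sigma$-stable object of class $\delta$; as $\cO_x$ is such an object, $\sigma$ lies on no wall for $\delta$. By the theory of moduli of Bridgeland-stable objects on K3 surfaces (Mukai, Yoshioka, Bayer--Macr\`i), the moduli space $M\defeq M_\sigma(\delta)$ is then a smooth projective K3 surface with a universal family $\cU$ on $M\times X$, and the Fourier--Mukai functor $\Phi_\cU\colon\Db(M)\to\Db(X)$ with kernel $\cU$ is an equivalence satisfying $\Phi_\cU(\cO_{[\cE]})\cong\cE$ for all $[\cE]\in M$.

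The family $\cO_\Delta|_{X\times V}$ gives a morphism $\psi\colon V\to M$, $y\mapsto[\cO_y]$, injective because distinct closed points yield non-isomorphic skyscrapers, hence inducing a birational map between the two-dimensional varieties $X$ and $M$; since both are smooth projective K3 surfaces, $\psi$ extends to an isomorphism $\bar\psi\colon X\simto M$. Composing $\Phi_\cU$ with $\bar\psi_*$ and absorbing a shift and a line-bundle twist, I obtain $\Psi\in\Aut\Db(X)$ with $\Psi(\cO_y)\cong\cO_y$ for all $y\in V$. The key step---and the only place where $\rho(X)=1$ enters---is that such a $\Psi$ must be standard. Indeed, $\Psi$ sends the generic skyscraper to a skyscraper, hence is birational in the sense of Definition~\ref{def:birational FM transform} with underlying open immersion the identity on $V$; its kernel restricts to $\cO_{\Delta}$ over $V\times X$, and the possible corrections of the kernel over the complement of $V$ are governed by spherical objects supported there. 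A curve in that complement would have to support a rank-$0$ spherical object, impossible by the first paragraph; a zero-dimensional bad locus is excluded by full faithfulness (it would force $\Psi(\cO_p)$ to have endomorphism algebra strictly larger than $k$). Hence the bad locus is empty, $\Psi$ is standard, $\Psi(\cO_y)\cong\cO_y$ (up to a fixed shift) for \emph{every} $y$ (cf.\ Remark~\ref{rem:point to point}), and therefore $\cO_y\cong\Phi_\cU(\cO_{\bar\psi(y)})$ is $\sigma$-stable for every closed point $y$; thus $\sigma\in U(X)$.

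The main obstacle is precisely this rigidity: ruling out ``exotic'' autoequivalences---spherical twists around sheaves supported on curves---that fix the generic skyscraper but move some of them. This is exactly where Picard number one is indispensable: in higher Picard rank a $(-2)$-curve $C\subset X$ carries a spherical sheaf $S$ of rank $0$, and for a suitable $\sigma$ just across the wall determined by $S$ the skyscrapers $\cO_y$ with $y\notin C$ are $\sigma$-stable while those with $y\in C$ are not, so $\sigma$ admits a stable skyscraper without being geometric---the lemma genuinely fails there. Two subsidiary points require care: justifying that $\sigma$ is generic for $\delta$ via the classification of totally semistable walls for a primitive isotropic class (or, failing that, controlling the passage from a nearby generic stability condition back to $\sigma$ using Bridgeland's description of the boundary of $U(X)$), and checking that $\psi$ extends compatibly enough that every $\cO_y$, not merely those over $V$, is recovered as $\Phi_\cU$ of a skyscraper. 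Alternatively the argument can be recast through Bridgeland's covering map from $\Stab^{\dagger}(X)$ onto its period domain and the explicit description of the region over $U(X)$, but the Picard-rank-one input enters in the same form: the absence of $(-2)$-classes in $\delta^{\perp}$.
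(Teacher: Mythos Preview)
The paper does not prove this lemma; it is quoted from \cite[Lemma~6.7]{bb} and used as a black box. So there is no in-paper argument to compare against, and I comment on your attempt directly.

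Your identification of the crucial numerical fact---that $\delta^{\perp}$ contains no $(-2)$-class when $\rho(X)=1$---is correct and is exactly where the Picard-rank hypothesis enters. But the main line of argument has two genuine gaps, both of which you yourself flag in the final paragraph. First, the claim that every wall for the class $\delta=(0,0,1)$ is totally semistable is not justified: for a primitive isotropic class there can be walls along which the moduli space undergoes a contraction without losing all stable objects, so the stability of a single $\cO_x$ does not by itself place $\sigma$ off every wall. Establishing genericity here requires precisely the wall analysis you defer. Second, and more seriously within this paper, the step ``such a $\Psi$ must be standard'' is circular: Proposition~\ref{prop:birational=standard}---the statement that a birational autoequivalence of $\Db(X)$ is standard when $\rho(X)=1$---is \emph{proved using the present lemma}. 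Your attempted independent justification (``corrections of the kernel over the bad locus are governed by spherical objects supported there'') is a heuristic, not a proof; making it rigorous would require a careful support analysis of the kernel, and the exclusion of a zero-dimensional bad locus ``by full faithfulness'' does not go through as written.

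For the record, the proof in \cite{bb} follows the alternative you sketch at the very end: it works directly with Bridgeland's description of $U(X)$ and its boundary inside $\Stab^{\dagger}(X)$, showing that on each boundary wall all skyscrapers are destabilised simultaneously---this is precisely where the absence of $(-2)$-classes in $\delta^{\perp}$ is used. That route avoids moduli spaces and birational autoequivalences entirely, and in particular avoids the circularity above.
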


\section{Main results}
\subsection{Fourier-Mukai locus of a K3 surface of Picard number one}~

In this subsection, we prove Theorem \ref{thm:main 1}.
Let $X$ be a complex K3 surface. 
First, we prove that a birational autoequivalence of $\Db(X)$ is standard when the Picard number of $X$ is one. 
We need the following lemma.

\begin{lem}\label{lem:chamber and standard}
Take a stability condition $\sigma \in U(X)$.
Let $\Phi \colon \Db(X) \simto \Db(X)$ be an autoequivalence satisfying $\Phi\sigma \in U(X)$ and $\Phi^H(v)=v$, where $v\defeq(0,0,1)$ in $\Halg(X,\bZ)$.
Then we have $\Phi \in \Aut^{\mathrm{st}}\Db(X)$.
\end{lem}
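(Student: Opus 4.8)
The plan is to exploit the assumption $\Phi\sigma\in U(X)$ to extract a geometric description of $\Phi$, and then invoke the known classification of standard autoequivalences of a K3 surface. First I would unwind the definition: since $\sigma=(Z,\cP)\in U(X)$ is geometric, there is a phase $\phi$ such that $\cO_x\in\cP(\phi)$ is $\sigma$-stable for every closed point $x\in X$. The condition $\Phi^H(v)=v$ with $v=(0,0,1)$ says that $\Phi$ preserves the Mukai vector of a skyscraper sheaf, so each $\Phi(\cO_x)$ has Mukai vector $(0,0,1)$; moreover $\Phi(\cO_x)$ is a $\Phi\sigma$-stable object of the same phase $\phi$ (stability is carried along the action, since $\Phi\sigma=(Z\circ(\Phi^H)^{-1},\Phi(\cP))$ and $Z\circ(\Phi^H)^{-1}(v)=Z(v)$). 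Because $\Phi\sigma\in U(X)$ is itself geometric, its heart $\cP'(\phi)$ (the abelian category of $\Phi\sigma$-semistable objects of phase $\phi$, up to the usual shift) contains all skyscrapers of $X$, and the point objects — i.e. the $\Phi\sigma$-stable objects of phase $\phi$ with Mukai vector $(0,0,1)$ — are, up to shift, exactly the skyscraper sheaves $\{\cO_x\}_{x\in X}$, since $\cO_x$ is rigid in that class and these are the only stable objects realizing that vector in a geometric chamber.

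The key step is then: the family $\{\Phi(\cO_x)\}_{x\in X}$ is, after a shift by an integer $n$, a family of structure sheaves $\{\cO_{f(x)}\}_{x\in X}$ for some bijection $f\colon X\to X$, and one checks this is induced by an automorphism of $X$. Concretely I would argue that $\Phi[-n]$ sends every skyscraper to a skyscraper, whence by the standard result (Orlov's representability plus the analysis of Fourier--Mukai kernels that restrict to a graph — this is exactly the content behind Definition \ref{def:birational FM transform} and Remark \ref{rem:point to point}) the kernel $\cE$ of $\Phi[-n]$ is, after possibly twisting, the structure sheaf of the graph of an isomorphism $f\colon X\simto X$ on a dense open set, hence on all of $X$ by properness and the fact that $\Phi$ is an equivalence. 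Therefore $\Phi[-n]\simeq (-)\otimes\cL\circ f^*$ for some line bundle $\cL\in\Pic X$, i.e. $\Phi$ is a composition of a shift, an automorphism, and a line bundle twist — precisely the definition of a standard autoequivalence. This gives $\Phi\in\Aut^{\mathrm{st}}\Db(X)$.

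I would organize the write-up as: (i) record that $\Phi(\cO_x)$ is $\Phi\sigma$-stable of fixed phase with Mukai vector $(0,0,1)$; (ii) use geometricity of $\Phi\sigma$ and Lemma \ref{lem:geometric chamber}-type reasoning to identify the point objects of $\Phi\sigma$ with shifted skyscrapers, so that some shift $\Phi[-n]$ preserves the set of skyscraper sheaves; (iii) conclude via the kernel analysis that $\Phi[-n]$ is a twist composed with an automorphism. The main obstacle is step (ii)–(iii): showing that a Fourier--Mukai equivalence taking every closed point's skyscraper to a skyscraper must be $f^*$ followed by a line bundle twist. This is where one needs the structure theory of Fourier--Mukai kernels — that flatness of the image family over $X$ and the point-to-point behaviour force the kernel to be supported on (a twist of) the graph of a morphism — together with the fact that the Picard number one hypothesis rules out extra autoequivalences permuting a continuous family of $(0,0,1)$-objects that are not honest skyscrapers. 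I would lean on the Bayer--Bridgeland description of $\Stab^\dagger(X)$ and the classification of spherical/point-like objects in the geometric chamber to pin this down, rather than reproving it from scratch.
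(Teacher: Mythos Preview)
Your approach is essentially the paper's: show that $\Phi$ (after an even shift) sends every skyscraper to a skyscraper, then conclude that $\Phi$ is standard. The paper packages step (ii) more cleanly by invoking the moduli space $M_\sigma(v)$ of $\sigma$-stable objects with Mukai vector $v=(0,0,1)$ (citing \cite{bm}): since both $\sigma$ and $\Phi\sigma$ are geometric, $M_\sigma(v)\cong X\cong M_{\Phi\sigma}(v)$ via skyscrapers, and $\Phi$ identifies these moduli spaces; this replaces your somewhat informal ``point objects in the geometric chamber are exactly skyscrapers'' argument. For step (iii) the paper simply cites \cite[Corollary~5.23]{huybre}, which is precisely the kernel analysis you sketch, so there is no need to pass through a dense open or invoke birationality here. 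One correction: this lemma does \emph{not} use the Picard number one hypothesis or Lemma~\ref{lem:geometric chamber}; those enter only in the next proposition, where one must first land in $U(X)$. Your reference to them here is harmless but should be removed.
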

\begin{proof}
Denote by $M_\sigma(v)$  the moduli space of $\sigma$-stable objects with Mukai vector $v$. Here, we fix a phase so that $\cO_x \in M_\sigma(v)$ holds for any closed point $x \in X$. 
For the details of $M_\sigma(v)$, see \cite{bm} for example.
Note that we have an isomorphism
\[ X \simto M_\sigma(v),~x \mapsto \cO_x. \]
By the assumption, we have the isomorphism
\[\Phi \colon M_\sigma(v) \simto M_{\Phi\sigma}(v),~E \mapsto \Phi(E).  \]
Here, we also fix a phase so that $\Phi(\cO_x) \in M_{\Phi\sigma}(v)$ holds for any closed point $x \in X$.
Since $\Phi \sigma$ is also geometric, composing even shifts to $\Phi$, we may assume that 
\[ M_{\Phi\sigma}(v)=M_\sigma(v).  \]
Therefore, for any closed point $x \in X$, there exists a closed point $y \in X$ such that $\Phi(\cO_x) \simeq \cO_y$.
By \cite[Corollary 5.23]{huybre}, there is a line bundle $\cL$ on $X$ and an automorphism $f$ of $X$ such that $\Phi \simeq f^* \circ (-) \otimes_X \cL$ holds.
\end{proof}

\begin{prop}\label{prop:birational=standard}
Assume that the Picard number of $X$ is one.
Consider an autoequivalence $\Phi \colon \Db(X) \simto \Db(X)$. If $\Phi$ is birational, we have
\[\Phi \in \Aut^{\mathrm{st}}\Db(X). \] 
\end{prop}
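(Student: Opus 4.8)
The plan is to deduce Proposition \ref{prop:birational=standard} from Lemma \ref{lem:chamber and standard} by producing, for a given birational autoequivalence $\Phi \colon \Db(X) \simto \Db(X)$, a geometric stability condition $\sigma \in U(X)$ whose image $\Phi\sigma$ also lies in $U(X)$, and then checking the Mukai-vector condition $\Phi^H(v)=v$ with $v=(0,0,1)$. The birationality of $\Phi$ is the key input: by Remark \ref{rem:point to point}, there is a non-empty open $U \subset X$ with $\Phi(\cO_x) \simeq \cO_{\varphi(x)}$ for every closed point $x \in U$, where $\varphi \colon U \to X$ is an open immersion.

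First I would fix any $\sigma \in U(X)$ (the geometric chamber is non-empty). Since $X$ has Picard number one, $\Stab^\dagger(X)$ is preserved by $\Aut\Db(X)$, so $\Phi\sigma \in \Stab^\dagger(X)$. Now pick a closed point $x \in U$; then $\cO_x$ is $\sigma$-stable (as $\sigma$ is geometric), hence $\Phi(\cO_x) \simeq \cO_{\varphi(x)}$ is $\Phi\sigma$-stable. Applying Lemma \ref{lem:geometric chamber} to the stability condition $\Phi\sigma \in \Stab^\dagger(X)$ together with the closed point $\varphi(x) \in X$, whose skyscraper $\cO_{\varphi(x)}$ is $\Phi\sigma$-stable, we conclude $\Phi\sigma \in U(X)$.

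Next I would verify $\Phi^H(v) = v$ for $v = (0,0,1)$. The Mukai vector of a skyscraper sheaf $\cO_x$ at a closed point is $v_X(\cO_x) = (0,0,1)$, since $\ch(\cO_x) = (0,0,1)$ (point class in $H^4$) and multiplying by $\sqrt{\mathrm{td}_X}$ does not change the degree-$4$ part. Since $\Phi(\cO_x) \simeq \cO_{\varphi(x)}$ for $x \in U$ a closed point, and since $v_X$ intertwines $\Phi^K$ and $\Phi^H$, we get $\Phi^H(v) = \Phi^H(v_X(\cO_x)) = v_X(\Phi(\cO_x)) = v_X(\cO_{\varphi(x)}) = (0,0,1) = v$. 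With $\sigma \in U(X)$, $\Phi\sigma \in U(X)$, and $\Phi^H(v) = v$ all established, Lemma \ref{lem:chamber and standard} applies directly and yields $\Phi \in \Aut^{\mathrm{st}}\Db(X)$.

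The only subtlety — and the place where the Picard-number-one hypothesis is genuinely used — is the appeal to Lemma \ref{lem:geometric chamber} to pass from "$\cO_{\varphi(x)}$ is $\Phi\sigma$-stable" to "$\Phi\sigma$ is geometric", i.e. to upgrade stability of a single skyscraper to stability of all of them with a common phase; this is exactly the content of \cite[Lemma 6.7]{bb} and is not expected to require further work here. Everything else is a routine chase through the definitions and the commutative square relating $\Phi^K$ and $\Phi^H$.
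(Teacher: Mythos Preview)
Your proof is correct and follows essentially the same route as the paper's: pick $\sigma\in U(X)$, use birationality plus Remark~\ref{rem:point to point} to get a skyscraper that is $\Phi\sigma$-stable, invoke Lemma~\ref{lem:geometric chamber} to land $\Phi\sigma$ in $U(X)$, and then apply Lemma~\ref{lem:chamber and standard}. You are in fact slightly more thorough than the paper, explicitly noting that $\Phi\sigma\in\Stab^\dagger(X)$ (needed for Lemma~\ref{lem:geometric chamber}) and verifying the hypothesis $\Phi^H(0,0,1)=(0,0,1)$ of Lemma~\ref{lem:chamber and standard}, both of which the paper leaves implicit.
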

\begin{proof}
Since $\Phi$ is birational, there is a non-empty open subset $U$ of $X$ and an open immersion $\varphi \colon U \to X$ such that for any closed point $x \in U$, we have $\Phi(\cO_x) \simeq \cO_{\varphi(x)}$.

Take a geometric stability condition $\sigma \in U(X)$ and a closed point $x_0 \in U$.
Then $\Phi(\cO_{x_0})$ is $\Phi\sigma$-stable. 
By Remark \ref{rem:point to point}, $\cO_{\varphi(x_0)}$ is
$\Phi\sigma$-stable. By Lemma \ref{lem:geometric chamber}, we have $\Phi\sigma \in U(X)$.
By Lemma \ref{lem:chamber and standard}, we have $\Phi \in \Aut^{\mathrm{st}}\Db(X)$.
\end{proof}

The autoequivalence group $\Aut \Db(X)$ naturally acts on $\Eq(\Db(X), \scrT)$ from the right. We define the set $I_X$ as
\[ I_X\defeq\Eq(\Db(X), \scrT)/\Aut^{\mathrm{st}}\Db(X). \]

The following is the first main theorem of this paper.

\begin{thm}\label{thm:main A}
Let $\scrT$ be a triangulated category equivalent to the derived category of some complex K3 surface of Picard number one. 
The followings hold.
\begin{itemize}
\item[$(1)$] We have
\[ \Spec^{\FM}_{\triangle}\scrT=\bigsqcup_{X \in \FM(\scrT)}\Spec_{X}\scrT\]
\item[$(2)$] For $X \in \FM(\scrT)$, we have
\[ \Spec_{X}\scrT=\bigsqcup_{\Phi \in I_X}\Phi\left(\Spec_{\otimes}\Db(X) \right). \]
\end{itemize}
In particular, 
\[ \Spec^{\FM}_{\triangle}\scrT=\bigsqcup_{X \in \FM(\scrT)} \bigsqcup_{\Phi \in I_X}\Phi\left(\Spec_{\otimes}\Db(X) \right) \]
holds.
\end{thm}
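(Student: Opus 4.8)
The plan is to prove the two disjoint-union assertions (1) and (2) and then combine them; once Proposition~\ref{prop:intersection}, Proposition~\ref{prop:standard} and Proposition~\ref{prop:birational=standard} are in hand, the argument is essentially formal. First I would record a preliminary fact: every Fourier--Mukai partner $X \in \FM(\scrT)$ is again a complex K3 surface of Picard number one. That it is a K3 surface is a well-known consequence of the derived invariance of Hochschild homology and of Kodaira dimension; that its Picard number equals one follows from Orlov's derived Torelli theorem, which identifies the transcendental lattices of derived-equivalent K3 surfaces as Hodge structures, so that the rank of $\NS(X)$, and hence the Picard number, is a derived invariant. In particular Proposition~\ref{prop:birational=standard} and part (2) apply to every $X \in \FM(\scrT)$.

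For part (2), fix $X \in \FM(\scrT)$. I would first observe that the union $\Spec_X\scrT = \bigcup_{\Phi \in \Eq(\Db(X),\scrT)}\Phi(\Spec_{\otimes}\Db(X))$ factors through $I_X$: if $\Phi_1 = \Phi_2 \circ \psi$ with $\psi \in \Aut^{\mathrm{st}}\Db(X)$, then $\Phi_2^{-1}\circ\Phi_1 = \psi$ is standard, so Proposition~\ref{prop:standard} gives $\Phi_1(\Spec_{\otimes}\Db(X)) = \Phi_2(\Spec_{\otimes}\Db(X))$; hence $\Spec_X\scrT = \bigcup_{\Phi \in I_X}\Phi(\Spec_{\otimes}\Db(X))$ is well defined, and it remains to prove this union is disjoint. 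Suppose $\Phi_1(\Spec_{\otimes}\Db(X)) \cap \Phi_2(\Spec_{\otimes}\Db(X)) \neq \emptyset$ for $\Phi_1, \Phi_2 \in \Eq(\Db(X),\scrT)$. By Proposition~\ref{prop:intersection} the autoequivalence $\Phi_2^{-1}\circ\Phi_1$ of $\Db(X)$ is birational up to shifts; since shifts belong to the subgroup $\Aut^{\mathrm{st}}\Db(X)$, Proposition~\ref{prop:birational=standard} gives $\Phi_2^{-1}\circ\Phi_1 \in \Aut^{\mathrm{st}}\Db(X)$, hence $\Phi_1$ and $\Phi_2$ represent the same class in $I_X$. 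This proves (2).

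For part (1), suppose $X, Y \in \FM(\scrT)$ with $\Spec_X\scrT \cap \Spec_Y\scrT \neq \emptyset$. Then there are equivalences $\Phi_1 \colon \Db(X) \simto \scrT$ and $\Phi_2 \colon \Db(Y) \simto \scrT$ with $\Phi_1(\Spec_{\otimes}\Db(X)) \cap \Phi_2(\Spec_{\otimes}\Db(Y)) \neq \emptyset$, so by Proposition~\ref{prop:intersection} the equivalence $\Phi_2^{-1}\circ\Phi_1 \colon \Db(X) \simto \Db(Y)$ is birational up to shifts. By Remark~\ref{rem:point to point} this yields a dense open $U \subseteq X$ together with an open immersion $\varphi \colon U \to Y$, so $X$ and $Y$ are birational; being K3 surfaces, they are minimal, hence the birational map between them is an isomorphism and $X \cong Y$. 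Combining (1) and (2)---using the preliminary fact once more so that (2) is applicable to each $X \in \FM(\scrT)$---gives the asserted decomposition $\Spec^{\FM}_{\triangle}\scrT = \bigsqcup_{X \in \FM(\scrT)}\bigsqcup_{\Phi \in I_X}\Phi(\Spec_{\otimes}\Db(X))$.

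The only genuinely substantive ingredient is Proposition~\ref{prop:birational=standard}, whose proof runs through Lemma~\ref{lem:chamber and standard}, the geometric chamber $U(X)$, and the Bayer--Bridgeland results guaranteeing that $\Aut\Db(X)$ preserves $\Stab^\dagger(X)$ under the Picard-number-one hypothesis; this is the step I expect to carry the real weight, and it is the only place where that hypothesis is genuinely used. Everything else in the proof of Theorem~\ref{thm:main A} is the bookkeeping of translating ``the Balmer spectra overlap'' into ``the transition functor is birational up to shifts'' and then into ``standard'', together with the classical facts that Fourier--Mukai partners of a K3 surface are K3 surfaces and that birational minimal surfaces are isomorphic.
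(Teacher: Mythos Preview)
Your proof is correct and follows essentially the same approach as the paper's: both arguments reduce (1) to the fact that birationally equivalent K3 surfaces are isomorphic via Proposition~\ref{prop:intersection}, and reduce (2) to the equivalence ``birational up to shifts $\Leftrightarrow$ standard'' obtained from Proposition~\ref{prop:birational=standard} combined with Proposition~\ref{prop:standard}. Your preliminary observation that every $X\in\FM(\scrT)$ again has Picard number one is a point the paper leaves implicit but is indeed needed to invoke Proposition~\ref{prop:birational=standard} for each Fourier--Mukai partner.
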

\begin{proof}
First, we prove $(1)$.
Note that for K3 surfaces $X$ and $Y$, if $\Psi \colon \Db(X) \simeq \Db(Y)$ is a birational Fourier-Mukai transform, then $X$ and $Y$ are isomorphic. By Proposition \ref{prop:intersection},  we obtain
\[ \Spec^{\FM}_{\triangle}\scrT=\bigsqcup_{X \in \FM(\scrT)}\Spec_{X}\scrT. \]

Next, we prove $(2)$.
Take $X \in \FM(\scrT)$. For equivalences 
\[ \Phi_1 \colon \Db(X) \simto \scrT,~ \Phi_2 \colon \Db(X) \simto \scrT,\] 
\[ \Phi_1\left(\Spec_{\otimes}\Db(X) \right) \cap \Phi_2\left(\Spec_{\otimes}\Db(X) \right) \neq \emptyset \]
holds if and only if $\Phi^{-1}_2 \circ \Phi_1$ is birational up to shifts by Proposition \ref{prop:intersection}. By Proposition \ref{prop:birational=standard}, $\Phi^{-1}_2 \circ \Phi_1$ is birational up to shifts if and only if $\Phi^{-1}_2 \circ \Phi_1$ is a standard autoequivalence of $\Db(X)$.
Therefore, we obtain 
\[ \Spec_{X}\scrT=\bigsqcup_{\Phi \in I_X}\Phi\left(\Spec_{\otimes}\Db(X) \right). \]
\end{proof}

\subsection{Serre invariant locus and Fourier-Mukai locus}~

In this subsection, we prove that Conjecture \ref{conj:strong} and Conjecture \ref{conj:weak} are not true when a triangulated category $\scrT$ is equivalent to the derived category of some complex K3 surface of Picard number one.

Let $\scrT$ be a triangulated category equivalent to the derived category of some complex K3 surface.
An object $E$ of $\scrT$ is said to be {\it spherical} if 
\[ \RHom(E,E) \simeq \bC \oplus \bC[-2] \]
holds in $\Db(\Spec \bC)$. 

Let $X$ be a K3 surface.
For a spherical object $E \in \Db(X)$, we have
\[ \langle v_X(E), v_X(E) \rangle=-2. \]

\begin{lem}\label{lem:rank zero}
Assume that the Picard number of $X$ is one.
For an object $E \in \Db(X)$ with $\rk E=0$, $E$ is not spherical.
\end{lem}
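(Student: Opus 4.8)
The plan is to derive a contradiction from the numerical identity for spherical objects recalled just before the statement: if $E$ were spherical, then $\langle v_X(E), v_X(E)\rangle = -2$. I will show that the hypothesis $\rk E = 0$ forces $\langle v_X(E), v_X(E)\rangle \geq 0$ once the Picard number of $X$ is one, which is incompatible with $-2$, and conclude that $E$ is not spherical.

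First I would unwind the Mukai vector. Since $v_X(E) = \ch(E)\cdot\sqrt{\mathrm{td}_X}$ and $\sqrt{\mathrm{td}_X}$ has degree-zero part equal to $1$, the $H^0(X,\bZ)$-component of $v_X(E)$ is $\ch_0(E) = \rk E$; note that this is consistent with the definition $\rk E = \sum_i (-1)^i \rk \cH^i(E)$, since rank is additive on triangles. Hence $\rk E = 0$ forces $v_X(E) = (0, c, m)$ for some $c \in \NS(X)$ and $m \in H^4(X,\bZ)\cong\bZ$. Substituting into the Mukai pairing $\langle (r_1,c_1,m_1),(r_2,c_2,m_2)\rangle = c_1 c_2 - r_1 m_2 - r_2 m_1$, the two cross terms drop out and we get $\langle v_X(E), v_X(E)\rangle = c \cdot c = c^2$.

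Next I would invoke the Picard number one hypothesis: $\NS(X) = \bZ H$ for an ample generator $H$, and $H^2$ is a positive even integer (positivity because $H$, or $-H$, is ample; evenness because the K3 lattice is even, or directly by Riemann--Roch). Equivalently, by the Hodge index theorem the intersection form on $\NS(X)$ has signature $(1,0)$, so it is positive definite. Writing $c = dH$ with $d \in \bZ$ gives $c^2 = d^2 H^2 \geq 0$. Combining with the previous step, if $E$ were spherical we would obtain $-2 = \langle v_X(E), v_X(E)\rangle = c^2 \geq 0$, a contradiction; therefore $E$ is not spherical.

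I do not expect a genuine obstacle in this argument: beyond the definitions and the already-recalled numerical property of spherical objects, the only input is the positive-definiteness (equivalently, positive even self-intersection of the generator) of $\NS(X)$ when the Picard number is one, which is standard. The single point requiring a word of care is the identification of $\rk E$ with the $H^0$-component of $v_X(E)$, which is exactly what makes the $r_1 m_2 + r_2 m_1$ terms in the Mukai pairing vanish.
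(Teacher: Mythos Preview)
Your argument is correct and is essentially identical to the paper's proof: both write $v_X(E)=(0,dH,m)$ using the ample generator $H$ of $\NS(X)$, compute $\langle v_X(E),v_X(E)\rangle=d^2H^2\geq 0$, and conclude this contradicts the value $-2$ required for a spherical object. The only differences are expository (you spell out why $\rk E$ is the $H^0$-component and mention evenness of $H^2$, neither of which is needed beyond positivity).
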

\begin{proof}
Let $H$ be an ample generator of $\NS(X)$. There are integers $c$ and $m$ satisfying $v_X(E)=(0, cH, m)$. Then we have
\[ \langle v_X(E), v_X(E)  \rangle=c^2H^2 \geq 0.  \]
Since $\langle v_X(E), v_X(E)  \rangle \neq -2$, the object $E$ is not spherical.
\end{proof}

By Remark \ref{rem:torsion} and Lemma \ref{lem:rank zero}, we have the following remark. 

\begin{rem}\label{rem:do not have spherical}
Assume that the Picard number of $X$ is one.
For $x \in X$, the prime thick subcategory $\scrS_X(x)$ does not contain a spherical object.
\end{rem}

The following is deduced from Remark \ref{rem:do not have spherical}.

\begin{lem}\label{lem:spherical and FM locus}
Assume that the Picard number of $X$ is one.
For $\scrP \in \Spec^{\FM}_{\triangle}\scrT$, the prime thick subcategory $\scrP$ does not contain a spherical object.
\end{lem}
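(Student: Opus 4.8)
The plan is to reduce the statement to Remark \ref{rem:do not have spherical} by exploiting two facts: the sphericity of an object is a property of the triangulated category alone (it is phrased purely in terms of $\RHom$), and every point of $\Spec^{\FM}_{\triangle}\scrT$ is, by construction, of the form $\Phi(\scrS_X(x))$ for some Fourier-Mukai partner $X$, some equivalence $\Phi\colon\Db(X)\simto\scrT$, and some $x\in X$. First I would unwind the definition: since $\scrP\in\Spec^{\FM}_{\triangle}\scrT$, Definition \ref{def:FM locus} together with Definition \ref{def:tt-spectrum of FM} gives $X\in\FM(\scrT)$ and an equivalence $\Phi\in\Eq(\Db(X),\scrT)$ with $\scrP\in\Phi\!\left(\Spec_{\otimes}\Db(X)\right)$. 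By Theorem \ref{thm:scheme vs Balmer} every prime ideal of $(\Perf X,\otimes_X)=(\Db(X),\otimes_X)$ is of the form $\scrS_X(x)$ for some $x\in X$, so $\scrP=\Phi(\scrS_X(x))$.

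Next I would transport sphericity across $\Phi$. Suppose for contradiction that $\scrP$ contains a spherical object $E\in\scrT$. Since $\Phi$ is an equivalence, $E\simeq\Phi(E')$ for some $E'\in\Db(X)$, and $\Phi(\scrS_X(x))=\scrP\ni E$ forces $E'\in\scrS_X(x)$ because $\Phi$ is fully faithful and essentially surjective on the subcategory. Because $\Phi$ induces isomorphisms on all $\Hom$ and $\Ext$ groups, $\RHom_{\Db(X)}(E',E')\simeq\RHom_{\scrT}(E,E)\simeq\bC\oplus\bC[-2]$, so $E'$ is a spherical object of $\Db(X)$. Moreover $X$ is a complex K3 surface of Picard number one (every Fourier-Mukai partner of a K3 surface is again a K3 surface, and by \cite{bb} the Picard number is a derived invariant among K3 surfaces; alternatively the Picard number equals the rank of the transcendental lattice's orthogonal complement, which is a derived invariant). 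Hence Remark \ref{rem:do not have spherical} applies to $X$ and $x$: the prime thick subcategory $\scrS_X(x)$ contains no spherical object, contradicting $E'\in\scrS_X(x)$. Therefore $\scrP$ contains no spherical object.

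The only genuinely delicate point is the assertion that the Fourier-Mukai partner $X$ realizing $\scrP$ again has Picard number one, since Remark \ref{rem:do not have spherical} is stated under that hypothesis; but this is standard for K3 surfaces (Fourier-Mukai partners of K3 surfaces are K3 surfaces with isometric transcendental lattices, hence the same Picard number), and in the present paper it is in any case part of the running hypothesis that every member of $\FM(\scrT)$ is a K3 surface of Picard number one. Everything else is a formal consequence of the fact that sphericity and the containment relation are preserved by equivalences, combined with the surjectivity in Theorem \ref{thm:scheme vs Balmer}.
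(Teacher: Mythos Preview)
Your proposal is correct and follows exactly the approach the paper has in mind; the paper's own proof is the single line ``deduced from Remark \ref{rem:do not have spherical},'' and your argument simply unpacks this. Your explicit discussion of why the Fourier--Mukai partner $X$ realizing $\scrP$ again has Picard number one is a point the paper leaves implicit, and your justification via isometry of transcendental lattices is the standard one.
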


On the other hand, the following holds.

\begin{lem}\label{lem:spherical in maximal}
There is a maximal thick subcategory $\scrM$ of $\scrT$ such that $\scrM$ contains a spherical object.
\end{lem}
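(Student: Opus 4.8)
The plan is to produce the maximal thick subcategory $\scrM$ inside $\Db(X)$ for a chosen complex K3 surface $X$ with $\scrT\simeq\Db(X)$, and then transport it along the equivalence; since the defining property of a spherical object is phrased entirely in terms of $\RHom$, it is preserved by any exact equivalence, so it is enough to find a maximal thick subcategory of $\Db(X)$ containing a spherical object. The obvious candidate for the spherical object is the structure sheaf: for a K3 surface one has $h^{1}(\cO_X)=0$ and, by Serre duality, $h^{2}(\cO_X)=h^{0}(\omega_X)=h^{0}(\cO_X)=1$, so $\operatorname{Ext}^{i}(\cO_X,\cO_X)=\bC$ for $i=0,2$ and vanishes otherwise, i.e.\ $\RHom(\cO_X,\cO_X)\simeq\bC\oplus\bC[-2]$ and $\cO_X$ is spherical.

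The key step is to show that $\cO_X$ is \emph{not} a split generator of $\Db(X)$, equivalently that the thick subcategory $\scrU$ generated by $\cO_X$ is a proper subcategory. I would argue this exactly as in the proof of Proposition \ref{prop:high genus}: the object $\cO_X$ is vertex-like in the sense of \cite[Definition 3.7]{el}, because $\RHom(\cO_X,\cO_X)$ is concentrated in non-negative degrees with $\Hom^{0}(\cO_X,\cO_X)=\bC$, and \cite[Proposition 3.9]{el} then forbids a vertex-like object from split-generating $\Db(X)$. For a self-contained alternative one can instead pass to a dg-enhancement: if $\scrU=\Db(X)$ then $\Db(X)$ would be equivalent to $\Perf(A)$ for the dg-algebra $A=\RHom(\cO_X,\cO_X)\simeq\bC\oplus\bC[-2]$, and comparing Hochschild homology — which is finite-dimensional (total dimension $24$) for $\Db(X)$ but infinite-dimensional for $\Perf(\bC\oplus\bC[-2])$ — gives a contradiction. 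Either way, $\scrU\subsetneq\Db(X)$.

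Finally, $\Db(X)$ admits a split generator by Example \ref{ex:split generator} (e.g.\ $\bigoplus_{i=0}^{2}\cO_X(i)$ for an ample $\cO_X(1)$), so Proposition \ref{prop:non-empty} applied to the proper thick subcategory $\scrU$ yields a maximal thick subcategory $\scrM\subseteq\Db(X)$ with $\cO_X\in\scrU\subseteq\scrM$; transporting $\scrM$ through the equivalence $\scrT\simeq\Db(X)$ gives the required maximal thick subcategory of $\scrT$ containing a spherical object. The only genuine obstacle is the non-generation step; I expect the cleanest exposition to simply reuse the \cite{el} argument already invoked for curves of genus $\geq 2$, keeping the Hochschild-homology comparison as a robust fallback.
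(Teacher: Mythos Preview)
Your argument matches the paper's almost verbatim: pass to $\Db(X)$, take $\cO_X$ as the spherical object, show it is not a split generator, and then invoke Proposition~\ref{prop:non-empty} to enlarge the thick subcategory it generates to a maximal one. The only difference is in the non-generation step: the paper cites \cite[Proposition~1.1]{ll}, which handles K3 surfaces directly, whereas you propose reusing \cite[Proposition~3.9]{el} (stated in a paper about curves, so one would need to verify it is formulated in the required generality) or, more safely, the Hochschild-homology comparison, which is a correct self-contained substitute.
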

\begin{proof}
By the assumption, the triangulated category $\scrT$ is equivalent to the derived category of some K3 surface $X$. Since the structure sheaf $\cO_X$ of $X$ is a spherical object in $\Db(X)$, it is enough to show that there is a maximal thick subcategory $\scrM$ of $\Db(X)$ containing $\cO_X$. 
 By \cite[Proposition 1.1]{ll}, $\cO_X$ is not a split generator of $\Db(X)$. Therefore, there is a maximal thick subcategory containing $\cO_X$ by Proposition \ref{prop:non-empty}.
\end{proof}

We obtain the second main theorem of this paper.

\begin{thm}\label{thm:main B}
Let $\scrT$ be a triangulated category equivalent to the derived category of some K3 surface of Picard number one. 
Then there is a maximal thick subcategory $\scrM \in \Spec_{\triangle}\scrT$ such that $\scrM \notin \Spec^{\FM}_{\triangle}\scrT$ holds.  
\end{thm}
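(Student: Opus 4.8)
The plan is to combine the two preceding lemmas in the most direct way possible. By Lemma \ref{lem:spherical in maximal} there exists a maximal thick subcategory $\scrM$ of $\scrT$ that contains a spherical object; since maximal thick subcategories are prime (as noted just after Proposition \ref{prop:Matusi vs Balmer}), we have $\scrM \in \Spec_{\triangle}\scrT$. On the other hand, Lemma \ref{lem:spherical and FM locus} asserts that every $\scrP \in \Spec^{\FM}_{\triangle}\scrT$ contains no spherical object. Hence $\scrM$, which does contain one, cannot lie in $\Spec^{\FM}_{\triangle}\scrT$, and this $\scrM$ is the required maximal thick subcategory.

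For completeness I would spell out why Lemma \ref{lem:spherical and FM locus} holds, since in the excerpt its statement is given but not its proof: unwinding the definition of $\Spec^{\FM}_{\triangle}\scrT = \bigcup_{X \in \FM(\scrT)} \bigcup_{\Phi \in \Eq(\Db(X),\scrT)} \Phi(\Spec_{\otimes}\Db(X))$, any $\scrP$ in the Fourier-Mukai locus is of the form $\Phi(\scrS_X(x))$ for some Fourier-Mukai partner $X$ (necessarily a K3 surface of Picard number one, being derived equivalent to such), some point $x \in X$, and some equivalence $\Phi$. By Remark \ref{rem:do not have spherical}, $\scrS_X(x)$ contains no spherical object; and since $\Phi$ is an exact equivalence it carries spherical objects to spherical objects and non-spherical to non-spherical, so $\Phi(\scrS_X(x))$ contains no spherical object either. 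This is exactly the content of Lemma \ref{lem:spherical and FM locus}, and Remark \ref{rem:do not have spherical} itself follows by combining Remark \ref{rem:torsion} (objects in $\scrS_X(x)$ have rank zero) with Lemma \ref{lem:rank zero} (rank-zero objects on a Picard-number-one K3 are never spherical).

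The only genuinely substantive inputs are therefore already isolated as the lemmas above, so the theorem itself is a one-line deduction. If anything counts as the main obstacle, it is the verification buried in Lemma \ref{lem:spherical in maximal} that $\cO_X$ fails to split-generate $\Db(X)$ — here one invokes \cite[Proposition 1.1]{ll}, so nothing further is needed — together with the reminder that "maximal $\Rightarrow$ prime" so that the constructed $\scrM$ genuinely lies in $\Spec_{\triangle}\scrT$. Finally, the theorem immediately yields Theorem \ref{thm:main 2}: since $\scrM \in \Spec_{\triangle}\scrT = \Spec^{\Ser}_{\triangle}\scrT$ by Remark \ref{rem:CY and Serre}(1), while $\scrM \notin \Spec^{\FM}_{\triangle}\scrT$, the difference $\Spec^{\Ser}_{\triangle}\scrT \setminus \Spec^{\FM}_{\triangle}\scrT$ is non-empty, giving the promised counterexample to Conjecture \ref{conj:strong} (and, via $\Spec^{\FM}_{\triangle}\scrT = \Spec^{\pFM}_{\triangle}\scrT$ in dimension two, to Conjecture \ref{conj:weak}).
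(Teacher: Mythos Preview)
Your proposal is correct and follows essentially the same approach as the paper: invoke Lemma \ref{lem:spherical in maximal} to obtain a maximal (hence prime) thick subcategory $\scrM$ containing a spherical object, then apply Lemma \ref{lem:spherical and FM locus} to conclude $\scrM \notin \Spec^{\FM}_{\triangle}\scrT$. Your additional unpacking of Lemma \ref{lem:spherical and FM locus} via Remark \ref{rem:do not have spherical} and the observation that Fourier--Mukai partners of a Picard-rank-one K3 surface again have Picard rank one is exactly the intended justification, which the paper leaves implicit.
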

\begin{proof}
By Lemma \ref{lem:spherical in maximal}, there is a maximal thick subcategory $\scrM$ of $\scrT$ containing a spherical object of $\scrT$. Note that $\scrM$ is a prime thick subcategory of $\scrT$. By Lemma \ref{lem:spherical and FM locus}, we  have $\scrM \notin \Spec^{\FM}_{\triangle}\scrT$.
\end{proof}

By Remark \ref{rem:CY and Serre} and Theorem \ref{thm:main B}, we obtain a counterexample of Conjecture \ref{conj:strong} and Conjecture \ref{conj:weak}.

\end{document}